\newtheorem{theorem}{Theorem}[section]
\newtheorem{lemma}[theorem]{Lemma}
\newcommand{\relproj}[1]{%
\cat{relproj}(#1)}%
\newcommand{\directsum}{\oplus}%
\newcommand{\abmon}[1]{%
A^{+}(#1)}%
\newcommand{\fac}[1]{%
\cat{fac(#1)}}%
\newcommand{\homperp}[1]{%
\cat{Homperp}(#1)}%
\newcommand{\xymap}[3]{\xymatrix@1{
#1 \ar[r]^-{_{#3}}  & #2  }
}
\newcommand{\cat}[1]{\text{\textup{\textsf{#1}}}}
\newcommand{\fpmod}[1]{\cat{fpmod}(#1)}
\newcommand{\fgproj}[1]{\cat{proj}(#1)}
\newcommand{\tses}[3]{0\rightarrow #1\rightarrow #2 \rightarrow%
#3\rightarrow 0}%
\newcommand{\shortexactsequence}[3]{\xymatrix@1{
0 \ar[r]  & #1 \ar[r]  & #2 \ar[r]  & #3 \ar[r]  & 0  }
}
\newcommand{\map}[3]{#1 \colon #2 \to #3}
\newcommand{\tensor}{\otimes}
\newcommand{\Mod}[1]{\cat{Mod}(#1)}
\newcommand{\pullses}[5]{%
\xymatrix{%
0 \ar[r]  & #1 \ar[r] \ar@<0.5ex>@{-}[d] \ar@<-0.5ex> @{-}[d]  & #2%
\ar[r] \ar[d] & #3 \ar[r] \ar[d] & 0\\%
0 \ar[r]  & #1 \ar[r]  & #4 \ar[r]  & #5 \ar[r]  & 0  }%
}%
\newcommand{\pullsesalong}[6]{%
\xymatrix{%
0 \ar[r]  & #1 \ar[r] \ar@<0.5ex>@{-}[d] \ar@<-0.5ex> @{-}[d]  & #2%
\ar[r] \ar[d] & #3 \ar[r] \ar[d]^{#6} & 0\\%
0 \ar[r]  & #1 \ar[r]  & #4 \ar[r]  & #5 \ar[r]  & 0  }%
}%
\newcommand{\pushses}[5]{%
\xymatrix{%
0 \ar[r]  & #1 \ar[r] \ar[d]  & #2 \ar[r] \ar[d] & #3 \ar[r] \ar@<0.5ex>%
@{-}[d]  \ar@<-0.5ex> @{-}[d]   & 0\\%
0 \ar[r]  & #4 \ar[r]  & #5 \ar[r]  & #3 \ar[r]  & 0  }%
}%
\newcommand{\pushsesalong}[6]{%
\xymatrix{%
0 \ar[r]  & #1 \ar[r] \ar[d]^{#6}  & #2 \ar[r] \ar[d] & #3 \ar[r] \ar@<0.5ex>%
@{-}[d]  \ar@<-0.5ex> @{-}[d]   & 0\\%
0 \ar[r]  & #4 \ar[r]  & #5 \ar[r]  & #3 \ar[r]  & 0  }%
}%
\newcommand{\Qnloops}{%
\underset{_{n \text{ arrows}}}{%
\xymatrix{%
\cdot \ar@(dl,ul)[] \ar@(ur,dr)[] \ar@(ul,ur)@{.>}[]%
}}}%
\newcommand{\QnKronecker}{%
\underset{_{n \text{ arrows}}}{\xymatrix{%
\cdot \ar@<1.2ex>[r] \ar@<-1.2ex>[r] \ar@<0.9ex>@{}[r] |{\cdot}%
\ar@<0.3ex>@{}[r] |{\cdot} \ar@<-0.3ex>@{}[r] |{\cdot}%
\ar@<-0.9ex>@{}[r] |{\cdot} &\cdot%
}}%
}%
\newcommand{\QnA}{%
\underset{_{n \text{ vertices}}}{\xymatrix{%
\cdot \ar[r] &\cdot\  \ar@{}[r] |{\cdots\cdots\cdots} &\ \cdot \ar[r] &\cdot%
}}%
}%
\DeclareMathOperator{\Hom}{Hom}
\DeclareMathOperator{\im}{im}
\DeclareMathOperator{\Ext}{Ext}
\DeclareMathOperator{\Tor}{Tor}
\begin{document}

\title{Universal localisations of hereditary rings}
\author{Aidan Schof\i eld}
\maketitle

\begin{abstract}
We describe all possible universal localisations of a hereditary ring
in terms of suitable full subcategories of the category of finitely
presented modules. For these universal localisations we then identify
the category of finitely presented bound modules over the universal
localisation as being equivalent to a certain full subcategory of
the category of finitely presented bound modules over the original
ring. We also describe the abelian monoid of finitely generated
projective modules over the universal localisation. 
\end{abstract}  

\section{Introduction} 
\label{sec:intro} 

The purpose of this paper is to study the universal localisations of a
hereditary ring. We shall describe all possible universal
localisations and for each of these we determine the kernel of the
homomorphism from $R$ to its universal localisation $R_{\Sigma}$ and
the kernel of the homomorphism from an $R$ module $M$ to $M \tensor
R_{\Sigma}$, the structure of a finitely presented module over
$R_{\Sigma}$ as an $R$ module and the structure of the category of
finitely presented modules over the universal localisation. 

Over a hereditary ring, every finitely presented module is a direct
sum of a finitely generated projective module and a bound module (a
module $M$ such that $\Hom(M,R) =0$). The universal localisation of a
hereditary ring is a hereditary ring and we show that the category of
finitely presented bound modules over $R_{\Sigma}$ is equivalent to a
full subcategory of $\fpmod{R} $ defined in terms of the set of maps
$\Sigma$. It remains to understand the finitely generated projective
modules over the universal localisation and we are able to describe
the abelian monoid of isomorphism classes of finitely generated
projective modules over the universal localisation in terms of a
suitable subcategory of the category of finitely presented modules
over $R$, $\fpmod{R}$.

This paper follows on from \cite{Scho07} which should be read before
this one.

This paper should be regarded as extending P. M. Cohn's and others'
investigation of the universal localisation of firs (see
\cite{CohnFIR}) . We find that essentially everything carries over to
this more general situation and our results clarify the problem of
determining when any universal localisation of a fir is itself a fir
by looking at modules instead of matrices. In fact, the methods
developed here will often allow us to determine the isomorphism
classes of finitely generated projective modules over such a universal
localisation of a fir which is a task unattempted by previous theory.

In a subsequent paper, we shall see
that these more general hereditary rings and their universal
localisations are of great interest since we shall show that the right
perpendicular category to a vector bundle over a smooth projective
curve is equivalent to $\Mod{R}$ for a suitable (usually
noncommutative) hereditary ring. The description of these rings and
their investigation heavily use the techniques of this paper.

In the next section, we characterise all possible universal
localisations of a hereditary ring. The following section describes
the finitely presented bound modules over the universal localisation
in terms of a suitable subcategory of finitely presented modules and
the last section studies the finitely generated projective modules
over the universal localisation again in terms of a suitable
subcategory of finitely presented modules.

Throughout this paper, a hereditary ring will mean a left and right
hereditary ring.

\section{Characterising the universal localisations}   
\label{sec:charul} 

In the paper \cite{Scho07}, we saw that a universal localisation
$R_{\Sigma}$ of a ring $R$ was best studied in terms of the full
subcategory of finitely presented modules of homological dimension at
most $1$ that are left perpendicular to the $R_{\Sigma}$ modules. We
say that a module $M$ of homological dimension at most $1$ is left
perpendicular to $N$ if and only if $\Hom(M,N) =0=\Ext(M,N) $.  We
call this the category of $R_{\Sigma}$ trivial modules,
$\cat{S}(\Sigma)$.  The best cases in that paper were shown to be
those where the kernel of a map in $\cat{S}(\Sigma)$ was itself a
factor of another such module. In the case of a hereditary ring, we
shall show that a better result holds. This category must be an exact
abelian subcategory of $\fpmod{R}$. By this we mean that it is an
abelian category and the inclusion is exact. Equivalently, it is an
additive subcategory closed under images, kernels and cokernels. Of
course, it is also closed under extensions and we call a full exact
abelian subcategory of $\fpmod{R}$ closed under extensions a
well-placed subcategory of $\fpmod{R}$. Our main result is that the
set of well-placed subcategories of $\fpmod{R}$ parametrise the
universal localisations of a hereditary ring $R$. These categories
have further structure which we end this section by describing since
it is important for the subsequent discussion of the finitely
presented bound modules and the finitely generated projective modules
over the universal localisation. We also give a precise description of
the structure of a finitely presented module over the universal
localisation as a directed union of modules over $R$.

In general, any universal localisation of a ring need not be a
universal localisation at an injective set of maps between finitely
generated projective modules. One can reduce to this case in principle
by factoring out a suitable ideal which may however be difficult to
determine in practice. For a hereditary ring we can avoid this problem
as our first lemma shows.

\begin{lemma}
\label{lem:reducetoinj}
Let $R$ be a right hereditary ring and let $\Sigma$ be a set of maps
between finitely generated projective modules over $R$. Then there
exists a set of injective maps between finitely generated projective
modules $\Sigma'$ such that $R_{\Sigma}\cong R_{\Sigma'}$.
\end{lemma}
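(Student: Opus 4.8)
The plan is to replace each map in $\Sigma$ by an injective map, together with one auxiliary injective map if necessary, without altering the universal localisation. Fix $\alpha\colon P\to Q$ in $\Sigma$. Since $R$ is right hereditary and $\im\alpha$ is a finitely generated submodule of the projective module $Q$, the module $\im\alpha$ is itself finitely generated projective; hence the short exact sequence $0\to\ker\alpha\to P\to\im\alpha\to 0$ splits, so $\ker\alpha$ is a direct summand of $P$ and in particular a finitely generated projective module. Write $P=\ker\alpha\oplus P'$ and let $\alpha'\colon P'\to Q$ be the restriction of $\alpha$; then $\alpha'$ is injective with the same image as $\alpha$. Let $\iota_\alpha\colon 0\to\ker\alpha$ be the inclusion of the zero module, which is an injective map between finitely generated projective modules. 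I would take
$$\Sigma'=\{\,\alpha' : \alpha\in\Sigma\,\}\cup\{\,\iota_\alpha : \alpha\in\Sigma\,\},$$
a set of injective maps between finitely generated projective modules.

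Next I would verify that a ring homomorphism $f\colon R\to S$ makes $\sigma\otimes_R S$ invertible for every $\sigma\in\Sigma$ precisely when it makes $\sigma'\otimes_R S$ invertible for every $\sigma'\in\Sigma'$; the isomorphism $R_{\Sigma}\cong R_{\Sigma'}$ then follows at once from the universal property of the universal localisation, since the two classes of $R$-rings being localised are literally the same. For the equivalence, note that the internal direct sum decomposition $P=\ker\alpha\oplus P'$ is preserved by $-\otimes_R S$, that $\alpha\otimes_R S$ annihilates the summand $(\ker\alpha)\otimes_R S$, and that it restricts to $\alpha'\otimes_R S$ on $P'\otimes_R S$. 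Consequently $\alpha\otimes_R S$ is bijective if and only if $(\ker\alpha)\otimes_R S=0$ and $\alpha'\otimes_R S$ is bijective; and $(\ker\alpha)\otimes_R S=0$ is exactly the condition that $\iota_\alpha\otimes_R S$ be bijective. This yields the required equivalence.

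The only place hereditarity is used — and the only genuine obstacle — is the first step: over an arbitrary ring the kernel of a map between finitely generated projective modules need not be projective, nor even finitely generated, so $\iota_\alpha$ need not be a map between finitely generated projective modules, and one has no grip on ``killing the kernel'' by inverting such a map. Over a right hereditary ring the difficulty evaporates because $\im\alpha$, being a finitely generated submodule of a projective, is automatically finitely generated projective, forcing the defining sequence of $\ker\alpha$ to split; everything else is formal. If one prefers a set $\Sigma'$ of the same cardinality as $\Sigma$, one can instead bundle the two maps associated with $\alpha$ into the single injective map $\alpha'\oplus\iota_\alpha\colon P'\to Q\oplus\ker\alpha$, and the same computation shows $(\alpha'\oplus\iota_\alpha)\otimes_R S$ is bijective if and only if $\alpha\otimes_R S$ is. Note that we do not, and in general could not, claim that $\im\alpha$ is a direct summand of $Q$; only the splitting of $\ker\alpha$ off of $P$ is needed.
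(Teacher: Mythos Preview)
Your proof is correct and follows essentially the same approach as the paper: factor each $\alpha$ through its image using hereditarity to split off the kernel, keep the injective part $\alpha'\colon P'\to Q$, and add one auxiliary injective map whose invertibility forces $(\ker\alpha)\otimes S=0$. The only cosmetic difference is the choice of that auxiliary map: you use $0\to\ker\alpha$, while the paper uses the split inclusion $\iota\colon I\hookrightarrow P$ (with cokernel $\ker\alpha$), but both encode exactly the condition $(\ker\alpha)\otimes S=0$ and the verification via the universal property is the same.
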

\begin{proof}
  Let $\map{f}{P}{Q}$ be a map in $\Sigma$. Let $I=\im(f) $ and
  $K=\ker(f) $ . Then $f=pf'$ where the induced map $\map{p}{P}{I}$ is
  split surjective with kernel $K$. Therefore, $p\tensor R_{\Sigma}$
  and $f' \tensor R_{\Sigma}$ are invertible. Conversely, if we invert
  $p$ and $f'$ then we must also invert $f$. However, adjoining a
  universal inverse to the split surjective map $p$ is equivalent to
  adjoining an inverse to its left inverse, a split injective map
  $\iota$ from $I$ to $P$ with cokernel $K$ and vice versa. Thus we
  may replace $\Sigma$ by $\Sigma_{1}=\Sigma-\{f\} \cup \{f',
  \iota\}$. Applying this to each non-injective map in $\Sigma$ gives
  us a set of injective maps $\Sigma'$ such that $R_{\Sigma}\cong
  R_{\Sigma'}$.
\end{proof}

We have already seen in \cite{Scho07} that whenever $\Sigma$ is a set
of injective maps between finitely generated projective modules, it is
useful to replace consideration of $\Sigma$ by the full subcategory of
$R_{\Sigma}$-trivial modules. This is the full subcategory of
$\fpmod{R}$ whose objects are those modules of homological dimension
at most $1$ such that their presentations are inverted by $\tensor
R_{\Sigma}$ or equivalently those $R$ modules $M$ of homological
dimension at most $1$ such that $\Hom(M,\ ) $ and $\Ext(M,\ ) $ vanish
on $R_{\Sigma}$ modules (see theorem 5.2 in \cite{Scho07}). In that
paper, we showed that particularly sensible results held whenever this
category has the property that kernels of maps in this category are
torsion with respect to the torsion theory generated by this
category. In fact, we can do better here. Over a right hereditary
ring, the category of $R_{\Sigma}$ trivial modules is an abelian
subcategory of $\fpmod{R}$ whose inclusion in $\fpmod{R}$ is exact.

\begin{theorem}
\label{th:Strivisab}
Let $R$ be a right hereditary ring and let $\Sigma$ be a set of maps
between finitely generated projective modules over $R$. Then the
category of $R_{\Sigma}$ trivial modules, $\cat{S}(\Sigma)$, is an
exact abelian subcategory of $\fpmod{R}$ closed under extensions.
\end{theorem}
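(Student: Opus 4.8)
\section*{Proof plan}

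The plan is to reduce to the case of injective maps, then to check directly that $\cat{S}(\Sigma)$ is additive, is closed under extensions, and is closed under the kernels, images and cokernels of its morphisms (these being formed as in the category of all $R$-modules); the abelian structure with exact inclusion then follows formally. By Lemma~\ref{lem:reducetoinj} we may assume that every map in $\Sigma$ is injective, so that $\cat{S}(\Sigma)$ is the full subcategory of $\fpmod{R}$ consisting of those $M$ with $\Hom(M,N)=0=\Ext(M,N)$ for every $R_{\Sigma}$-module $N$ --- the requirement that $M$ have homological dimension at most $1$ being automatic, since $R$ is right hereditary and $M$ is finitely presented. Additivity and $0\in\cat{S}(\Sigma)$ are immediate from additivity of $\Hom$ and $\Ext$ in the first variable, and for an extension $\tses{M'}{M}{M''}$ with $M',M''\in\cat{S}(\Sigma)$ the module $M$ is again finitely presented, while applying $\Hom(-,N)$ to the sequence and using $\Ext^{2}(M'',N)=0$ squeezes $\Hom(M,N)$ and $\Ext(M,N)$ between vanishing groups.

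The substance of the theorem is closure under kernels, images and cokernels, and the first point to settle --- the one place where right hereditariness really enters --- is that these remain finitely presented. Fix a morphism $\map{f}{M}{M'}$ in $\cat{S}(\Sigma)$ and put $K=\ker f$, $I=\im f$, $C=\cok f$, giving short exact sequences $\tses{K}{M}{I}$ and $\tses{I}{M'}{C}$. Take a finite projective presentation $P_{1}\to P_{0}\to M'\to 0$ and let $Q\subseteq P_{0}$ be the preimage of $I$, with $L=\ker(P_{0}\to M')$; then $Q/L\cong I$, the module $L$ is finitely generated (a quotient of $P_{1}$), so $Q$ is finitely generated as an extension of $I$ by $L$, and $Q$ is projective as a submodule of the finitely generated projective $P_{0}$ over the right hereditary ring $R$. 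Hence $I$ is finitely presented, and therefore so is $C=M'/I$. The analogous argument with a presentation of $M$ (now using that $I$ is finitely presented, via Schanuel's lemma, to see that the relevant preimage submodule of $P_{0}$ is finitely generated, hence finitely generated projective) shows $K$ is finitely presented.

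It remains to place $K$, $I$ and $C$ in $\cat{S}(\Sigma)$, and this is forced by the long exact $\Hom/\Ext$ sequences of the two short exact sequences above against an arbitrary $R_{\Sigma}$-module $N$, bearing in mind that $\Ext^{2}(-,N)$ vanishes on all the modules in sight. From $\tses{K}{M}{I}$ one gets $\Hom(I,N)\hookrightarrow\Hom(M,N)=0$, and from $\tses{I}{M'}{C}$ one gets $\Ext(I,N)$ as a quotient of $\Ext(M',N)=0$; so $I\in\cat{S}(\Sigma)$. Feeding $\Ext(I,N)=0$ back into the first sequence yields $\Hom(K,N)\hookrightarrow\Ext(I,N)=0$ and $\Ext(K,N)$ as a quotient of $\Ext(M,N)=0$, so $K\in\cat{S}(\Sigma)$; feeding $\Hom(I,N)=0$ into the second sequence, together with $\Hom(C,N)\hookrightarrow\Hom(M',N)=0$ and the isomorphism $\Ext(C,N)\cong\Hom(I,N)$ read off from that sequence, gives $C\in\cat{S}(\Sigma)$. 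Thus $\cat{S}(\Sigma)$ is an additive, extension-closed full subcategory of $\fpmod{R}$ closed under kernels, images and cokernels of its morphisms, hence an exact abelian subcategory closed under extensions. The only genuine work is the finite-presentation bookkeeping of the middle paragraph; once that is in place, the homological conclusions are automatic.
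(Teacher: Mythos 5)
Your proof is correct and follows essentially the same route as the paper's: the image of a morphism is placed in $\cat{S}(\Sigma)$ by observing that it is simultaneously a factor module (which kills $\Hom$ into $R_{\Sigma}$-modules) and a submodule (which kills $\Ext$, since $R$ is right hereditary), and the kernel and cokernel then follow from the resulting short exact sequences. The only addition is your explicit verification that kernels, images and cokernels remain finitely presented, a point the paper's proof leaves implicit.
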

\begin{proof}
We need to show that $\cat{S}(\Sigma)$ is closed under
images. However, for any factor $F$ of a module in $\cat{S}(\Sigma)$,
$\Hom(F,\ ) $ vanishes on $R_{\Sigma}$ modules and for any submodule
$S$ of a module in $\cat{S}(\Sigma)$, $\Ext(S,\ ) $ vanishes on
$R_{\Sigma}$ modules (because $R$ is right hereditary). Since the
image of a map in $\cat{S}(\Sigma)$ is both a submodule and a factor
module, we see that $\cat{S}(\Sigma)$ is closed under images. The rest
follows once we note that given a short exact sequence
$\tses{A}{B}{C}$ where two of $A,B,C$ lie in $\cat{S}(\Sigma)$ then so
does the remaining term. 
\end{proof}

Thus to any universal localisation of a right hereditary ring we may
associate a full exact abelian subcategory of $\fpmod{R}$ closed under
extensions. We recall that we have named such a subcategory a
well-placed subcategory.

\begin{theorem}
\label{th:charulbywpc}
Let $R$ be a right hereditary ring. Then the universal localisations
of $R$ are parametrised by the well-placed subcategories of
$\fpmod{R}$ by associating to a universal localisation $R_{\Sigma}$
the category of $R_{\Sigma}$ trivial modules.
\end{theorem}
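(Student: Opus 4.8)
The plan is to establish a bijection between the set of universal localisations of $R$ (up to the natural equivalence $R_\Sigma \cong R_{\Sigma'}$ meaning the two localisation maps have isomorphic targets under $R$) and the set of well-placed subcategories of $\fpmod{R}$. One direction is already in hand: Theorem~\ref{th:Strivisab} shows that $\Sigma \mapsto \cat{S}(\Sigma)$ lands in the well-placed subcategories, and by Lemma~\ref{lem:reducetoinj} together with Theorem~5.2 of \cite{Scho07} the category $\cat{S}(\Sigma)$ depends only on $R_\Sigma$, not on the particular presentation $\Sigma$. So the map from universal localisations to well-placed subcategories is well defined; it remains to show it is injective and surjective.

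For surjectivity, I would start with an arbitrary well-placed subcategory $\cat{C} \subseteq \fpmod{R}$ and produce a universal localisation whose trivial modules are exactly $\cat{C}$. Since $R$ is right hereditary every object of $\cat{C}$ has homological dimension at most $1$, so each $M \in \cat{C}$ has a presentation $\tses{P_1}{P_0}{M}$ by finitely generated projectives; let $\Sigma_{\cat{C}}$ be the set of all these presentation maps (ranging over a set of representatives of isomorphism classes in $\cat{C}$, or over all of $\cat{C}$ — harmless since universal localisation only depends on the set of maps up to the morphisms they invert). Inverting $\Sigma_{\cat{C}}$ forces every $M \in \cat{C}$ to become trivial, so $\cat{C} \subseteq \cat{S}(\Sigma_{\cat{C}})$. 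The reverse inclusion $\cat{S}(\Sigma_{\cat{C}}) \subseteq \cat{C}$ is the substantive point: one must show that a finitely presented module of homological dimension $\le 1$ all of whose presentations are inverted by $\tensor R_{\Sigma_{\cat{C}}}$ already lies in $\cat{C}$. Here I would exploit the fact that $\cat{C}$, being a well-placed subcategory, generates a hereditary torsion theory on $\fpmod{R}$ (closure under images, kernels, cokernels, extensions), so that the class of modules built from $\cat{C}$ by these operations is already $\cat{C}$ itself; combined with the description (to be recalled from \cite{Scho07}) of $\cat{S}(\Sigma_{\cat{C}})$ as exactly those homological-dimension-$\le 1$ modules that are obtained as iterated kernels/cokernels/extensions of the modules presented by $\Sigma_{\cat{C}}$, this yields $\cat{S}(\Sigma_{\cat{C}}) = \cat{C}$.

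For injectivity, suppose $\cat{S}(\Sigma) = \cat{S}(\Sigma')$; I must show $R_\Sigma \cong R_{\Sigma'}$. By Lemma~\ref{lem:reducetoinj} I may assume both $\Sigma$ and $\Sigma'$ consist of injective maps between finitely generated projectives, so each map in $\Sigma$ has cokernel in $\cat{S}(\Sigma) = \cat{S}(\Sigma')$, and conversely every module in $\cat{S}(\Sigma')$ — in particular the cokernels of maps in $\Sigma'$ — is built from the presentation maps of objects of $\cat{S}(\Sigma')$ by operations that preserve invertibility under $\tensor R_\Sigma$. Hence $\tensor R_\Sigma$ inverts every map in $\Sigma'$ and, symmetrically, $\tensor R_{\Sigma'}$ inverts every map in $\Sigma$. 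By the universal property of universal localisation this gives mutually inverse $R$-algebra homomorphisms between $R_\Sigma$ and $R_{\Sigma'}$, so the two localisations coincide.

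The main obstacle is the reverse inclusion $\cat{S}(\Sigma_{\cat{C}}) \subseteq \cat{C}$ in the surjectivity argument: it requires knowing precisely which modules become trivial once a given set of presentations is inverted, and translating ``all presentations of $M$ are inverted'' into a structural statement placing $M$ inside the abelian-and-extension-closed hull of $\cat{C}$. This is exactly where right hereditariness is indispensable — it guarantees submodules of projectives are projective, so that the relevant closure operations stay inside homological dimension $\le 1$ — and where the results of \cite{Scho07} on the structure of $\cat{S}(\Sigma)$ (and the directed-union description of finitely presented $R_\Sigma$-modules promised in the section introduction) do the real work.
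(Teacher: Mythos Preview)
Your overall strategy matches the paper's: one direction is Theorem~\ref{th:Strivisab}, the other constructs $R_{\cat{C}}$ from a well-placed $\cat{C}$ and checks that the trivial modules of $R_{\cat{C}}$ are exactly $\cat{C}$. Your injectivity argument is fine and in fact more explicit than the paper, which leaves it implicit.

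The wobble is in how you handle the reverse inclusion $\cat{S}(\Sigma_{\cat{C}})\subseteq\cat{C}$. You posit that \cite{Scho07} describes $\cat{S}(\Sigma_{\cat{C}})$ as the class of homological-dimension-$\le 1$ modules built from $\cat{C}$ by iterated kernels, cokernels and extensions, and then use that $\cat{C}$ is closed under these. That description is not what \cite{Scho07} actually provides, so as written this step is a guess rather than an argument. The paper's route is cleaner and avoids the issue entirely: a well-placed subcategory is in particular a \emph{pre-localising} category in the sense of \cite{Scho07}, and because $\cat{C}$ is closed under kernels, the kernel of any map in $\cat{C}$ lies in $\cat{C}$ and is therefore torsion for the torsion theory $\cat{C}$ generates. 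These are exactly the hypotheses of Theorem~5.6 of \cite{Scho07}, which then gives $\cat{S}(\Sigma_{\cat{C}})=\cat{C}$ directly. So rather than trying to unpack a structural description of $\cat{S}$, you should simply verify the ``pre-localising with torsion kernels'' hypothesis (immediate from the definition of well-placed) and cite Theorem~5.6.
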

\begin{proof}
We have already seen one half of this bijection in the previous
theorem and the following remark. Conversely, let $\cat{E}$ be a
well-placed subcategory of $\fpmod{R}$. Then in the terminology of
\cite{Scho07} $\cat{E}$ is certainly a pre-localising category such
that the kernel of any map in $\cat{E}$ is torsion with respect to the
torsion theory generated by $\cat{E}$. Therefore, we may form the
universal localisation $R_{\cat{E}}$ and by theorem 5.6 of
\cite{Scho07}, the category of $R_{\cat{E}}$-trivial modules is just
$\cat{E}$. Thus we have the stated bijection. 
\end{proof}

Since all universal localisations of a right hereditary ring are
parametrised by well-placed subcategories of $\fpmod{R}$ we shall use
the notation $R_{\cat{E}}$ for the universal localisation
corresponding to the well-placed subcategory $\cat{E}$. Note that the
category of $R_{\cat{E}}$ modules considered as $R$ modules can be
identified with the category $\cat{E}^{\perp}$ which is the full
subcategory of $R$ modules $M$ such that $\Hom(E,M) =0= \Ext(E,M) $
for every $E\in \cat{E}$. 

A well-placed subcategory $\cat{E}$ of $\fpmod{R}$ for a right
hereditary ring is fairly easy to understand. A finitely presented
module over a right hereditary ring is a direct sum of a bound module
and a finitely generated projective module. Clearly, a well-placed
subcategory of $\fpmod{R}$ is closed under direct summands and so
these summands lie in $\cat{E}$. Any finitely presented bound module
over a right hereditary ring satisfies the ascending chain condition
on bound submodules by corollary 5.1.7 of \cite{CohnFIR}. Thus in the
case where $\cat{E}$ contains no projective module (which is clearly
equivalent to assuming that $R$ embeds in the universal localisation),
we know that $\cat{E}$ is a Noetherian abelian category. The best
results occur when we assume that $R$ is a hereditary ring by which we
mean that is both left and right hereditary.

For the rest of the paper we shall usually assume that $R$ embeds in
the universal localisation. This is equivalent to assuming that
$\cat{E}$ contains no nonzero finitely generated projective module. We
can reduce to this stuation by factoring out by the trace ideal of the
finitely generated projective modules in $\cat{E}$. This is not ideal
but the results are most simply demonstrated in this context.

\begin{theorem}
\label{th:wpisfl}
Let $R$ be a hereditary ring and let $\cat{E}$ be a well-placed
subcategory of $\fpmod{R}$  of modules of homological dimension
$1$. Then $\cat{E}$ is a finite length category.
\end{theorem}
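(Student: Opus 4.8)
The plan is to prove that every object of $\cat{E}$ satisfies both the ascending and the descending chain condition on its subobjects in $\cat{E}$; the Jordan--H\"older theorem then furnishes a finite composition series for each object, so $\cat{E}$ is a finite length category. One of the two chain conditions is essentially already available. Since every nonzero object of $\cat{E}$ has homological dimension exactly $1$ and $\cat{E}$ is closed under direct summands, $\cat{E}$ contains no nonzero finitely generated projective module; as $R$ is hereditary, every finitely presented $R$-module is the direct sum of a bound module and a finitely generated projective, so every object of $\cat{E}$ is a finitely presented bound module. Because $\cat{E}$ is an exact abelian subcategory of $\fpmod{R}$, a subobject of $M\in\cat{E}$ is simply an $R$-submodule of $M$ lying in $\cat{E}$, and such a submodule is in particular a bound submodule of $M$; by corollary 5.1.7 of \cite{CohnFIR} the bound submodules of $M$ satisfy the ascending chain condition, so $\cat{E}$ is a Noetherian category.

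For the descending chain condition I would exploit the left--right symmetry: $R$ is also left hereditary, so $R^{\mathrm{op}}$ is right hereditary. Over a hereditary ring the functor $\Ext^1_R(-,R)$ is an exact duality between the finitely presented bound left $R$-modules and the finitely presented bound right $R$-modules, with quasi-inverse $\Ext^1_{R^{\mathrm{op}}}(-,R)$: if $M$ is bound with a finitely generated projective presentation $0\to P_1\to P_0\to M\to 0$, then applying $\Hom_R(-,R)$ gives, since $\Hom_R(M,R)=0$, an exact sequence $0\to\Hom_R(P_0,R)\to\Hom_R(P_1,R)\to\Ext^1_R(M,R)\to 0$, and a second application both recovers $M$ and exhibits $\Ext^1_R(M,R)$ as bound, while exactness on short exact sequences of bound modules is read off from the long exact $\Hom$/$\Ext$ sequence, whose $\Hom$ terms vanish by boundedness and whose $\Ext^2$ term vanishes since $R$ is hereditary. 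Since every object of $\cat{E}$ is bound, $\Ext^1_R(-,R)$ carries $\cat{E}$ onto a full subcategory $\cat{E}'$ of $\fpmod{R^{\mathrm{op}}}$ with $\cat{E}\simeq(\cat{E}')^{\mathrm{op}}$; the objects of $\cat{E}'$ are bound, so $\cat{E}'$ contains no nonzero projective, and $\cat{E}'$ is well-placed --- either directly, from exactness of the duality, or by writing $\cat{E}=\cat{S}(\Sigma)$ for an injective set $\Sigma$ of maps between finitely generated projectives (lemma~\ref{lem:reducetoinj} and theorem~\ref{th:charulbywpc}), identifying $\cat{E}'$ with $\cat{S}(\Sigma^{\ast})$ for $\Sigma^{\ast}$ the transposed maps, and quoting theorem~\ref{th:Strivisab}, using $(R^{\mathrm{op}})_{\Sigma^{\ast}}\cong(R_{\Sigma})^{\mathrm{op}}$. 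The Noetherian argument of the preceding paragraph, applied over the right hereditary ring $R^{\mathrm{op}}$, now shows that $\cat{E}'$ is Noetherian; hence $\cat{E}\simeq(\cat{E}')^{\mathrm{op}}$ is Artinian.

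With both chain conditions on subobjects in hand, every object of $\cat{E}$ has a finite composition series, so $\cat{E}$ is a finite length category, as claimed. The point that I expect to need the most care is in the second paragraph: verifying that $\Ext^1_R(-,R)$ (equivalently, the transpose) really is an exact duality on finitely presented bound modules over a hereditary ring, and that it sends $\cat{E}$ to a well-placed subcategory of $\fpmod{R^{\mathrm{op}}}$ with no nonzero projective object --- concretely, that transposing a finitely generated projective presentation which becomes invertible over $R_{\cat{E}}$ yields one which becomes invertible over $(R_{\cat{E}})^{\mathrm{op}}$. Granting that, the theorem reduces to two applications of corollary 5.1.7 of \cite{CohnFIR} together with the Jordan--H\"older theorem.
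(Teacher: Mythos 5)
Your argument is correct, and its first half (observing that the absence of nonzero projective summands forces every object of $\cat{E}$ to be bound, then getting the ascending chain condition on bound submodules from Cohn) is exactly what the paper does. The difference is in the descending chain condition: the paper simply cites theorem 5.2.3 of \cite{CohnFIR}, which asserts both chain conditions on bound submodules of a bound module over a (two-sided) hereditary ring, whereas you re-derive the DCC from the ACC by hand via the transpose duality $\Ext^{1}_{R}(-,R)$ between finitely presented bound left and right modules. Your duality argument is sound --- the key verifications (that $\Ext^{1}_{R}(M,R)$ is again bound and finitely presented, that the functor is its own quasi-inverse, and that it is exact on short exact sequences of bound modules because the outer $\Hom$ terms and the $\Ext^{2}$ term vanish) all go through, and a strictly descending chain of subobjects of $M$ in $\cat{E}$ dualises to a strictly ascending chain of bound submodules of $\Ext^{1}_{R}(M,R)$, contradicting the ACC over the other side of the ring. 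This is essentially the standard proof of the result the paper cites, so you have made the argument self-contained at the cost of length; note also that you do not actually need $\cat{E}'$ to be well-placed (or to identify it with $\cat{S}(\Sigma^{\ast})$) --- it suffices that the kernels dual to the quotients $M/M_{i}\in\cat{E}$ are bound submodules of the dual module, so that corollary 5.1.7 of \cite{CohnFIR} applies directly over $R^{\mathrm{op}}$, and that part of your second paragraph could be pruned.
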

\begin{proof}
We assumed that every module in $\cat{E}$ has homological dimension
$1$; therefore no such module can have a nonzero projective summand
since $\cat{E}$ is closed under direct summands. It follows that all
modules in $\cat{E}$ are bound. However, by theorem 5.2.3 of
\cite{CohnFIR}, any bound module over a hereditary ring satisfies the
descending chain condition on bound submodules as well as the
ascending chain condition and so $\cat{E}$ is a finite length
category. 
\end{proof}

We say that a set $S$ of modules is $\Hom$-perpendicular if and only
if for $T,U\in S$, and a map $\map{f}{T}{U}$ then $f$ is invertible or
$0$. Given a $\Hom$-perpendicular set of bound finitely presented
modules, its extension closure is a well-placed subcategory of
$\fpmod{R}$. Conversely, given a well-placed subcategory of
$\fpmod{R}$ of modules of homological dimension $1$, it is a finite
length category and its simple objects form a $\Hom$-perpendicular set
of bound finitely presented modules. Moreover, since $\cat{E}$ is
closed under extensions and every object has a composition series in
$\cat{E}$, $\cat{E}$ is the closure of its set of simple objects under
extensions. We summarise this discussion in the following theorem.

\begin{theorem}
\label{th:charwp}
Let $R$ be a hereditary ring. Then its universal localisations into
which it embeds are parametrised by the set of $\Hom$-perpendicular
sets of isomorphism classes of finitely presented bound modules.  This
bijection arises by assigning to the universal localisation
$R_{\Sigma}$ the set of simple objects in the category of $R_{\Sigma}$
trivial modules. 
\end{theorem}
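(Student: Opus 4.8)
\emph{Overall strategy.} The plan is to read the statement as the composite of the bijection of Theorem~\ref{th:charulbywpc} with a bijection between \emph{those} well-placed subcategories of $\fpmod{R}$ that contain no nonzero finitely generated projective module and the $\Hom$-perpendicular sets of isomorphism classes of finitely presented bound modules. The restriction to such $\cat{E}$ is exactly the restriction to universal localisations into which $R$ embeds, as already noted in the text, so once this second bijection is established and shown to carry $\cat{S}(\Sigma)$ to its set of simple objects, the theorem follows. Two structural facts will be used throughout: every finitely presented module over the hereditary ring $R$ is the direct sum of a bound module and a finitely generated projective module, and a well-placed subcategory is closed under summands, so ``$\cat{E}$ has no nonzero finitely generated projective'' is the same as ``every object of $\cat{E}$ is bound''; and a nonzero bound finitely presented module over $R$ has homological dimension exactly $1$ (a finite projective presentation gives $\le 1$, and $0$ would make it a nonzero projective, which is not bound), so Theorem~\ref{th:wpisfl} applies to every such $\cat{E}$ and makes it a finite length abelian category.

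\emph{From $\cat{E}$ to a $\Hom$-perpendicular set.} Let $\cat{E}$ be a well-placed subcategory of $\fpmod{R}$ with no nonzero finitely generated projective object. By the remarks above $\cat{E}$ is of finite length, so it has simple objects and each is a finitely presented bound module. If $S,T$ are simple objects of $\cat{E}$ and $\map{f}{S}{T}$ is any morphism, then $\kernel{f}$ and $\im f$ again lie in $\cat{E}$ because the inclusion $\cat{E}\hookrightarrow\fpmod{R}$ is exact, so simplicity of $S$ and of $T$ forces $f$ to be $0$ or an isomorphism. Thus the set of isomorphism classes of simple objects of $\cat{E}$ is $\Hom$-perpendicular, and this is the set we attach to $\cat{E}$. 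Conversely, because $\cat{E}$ is of finite length and closed under extensions, every object of $\cat{E}$ is obtained from simple objects by iterated extensions, so $\cat{E}$ coincides with the extension closure of its set of simple objects; this is one half of the claim that the two assignments are mutually inverse.

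\emph{From a $\Hom$-perpendicular set back to $\cat{E}$, and the main obstacle.} Given a $\Hom$-perpendicular set $\mathcal{S}$ of finitely presented bound modules, attach to it the extension closure $\cat{E}(\mathcal{S})$ of $\mathcal{S}$ inside $\fpmod{R}$. It is closed under extensions by construction, and, by induction on the number of steps, every object of $\cat{E}(\mathcal{S})$ carries a finite filtration all of whose subquotients belong to $\mathcal{S}$. The substantive point — and the step I expect to be the main obstacle — is that $\cat{E}(\mathcal{S})$ is then well-placed, i.e.\ closed in $\fpmod{R}$ under images, kernels and cokernels of its morphisms, whence an exact abelian subcategory. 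The mechanism is a Jordan--Hölder analysis powered by the $\Hom$-perpendicularity of $\mathcal{S}$: since each member of $\mathcal{S}$ is a brick and there are no nonzero nonisomorphisms between distinct members, the multiset of subquotients of an $\mathcal{S}$-filtration behaves like a set of composition factors, and a nonzero map between objects of $\cat{E}(\mathcal{S})$ interacts with the bottom (respectively top) filtration layers in the rigid way familiar from composition series; from this one reads off $\mathcal{S}$-filtrations of $\kernel{f}\subseteq M$, of $\im f\subseteq N$ and of $\cok f$ for any $\map{f}{M}{N}$ in $\cat{E}(\mathcal{S})$. Here the hereditary hypothesis on $R$ is exactly what keeps these kernels, images and cokernels finitely presented and of homological dimension at most $1$, just as in Theorems~\ref{th:Strivisab} and~\ref{th:wpisfl}; an alternative packaging is to present each $S\in\mathcal{S}$ as the cokernel of an injective map between finitely generated projectives, take $\Sigma$ to be this set of maps, and identify $\cat{E}(\mathcal{S})$ with $\cat{S}(\Sigma)$ — well-placed by Theorem~\ref{th:Strivisab} — but that identification still rests on the same point, that no $R_{\Sigma}$-trivial module escapes the extension closure of $\mathcal{S}$. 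The same analysis shows that $\mathcal{S}$ is precisely the set of simple objects of $\cat{E}(\mathcal{S})$ — the bottom layer of any filtration is a member of $\mathcal{S}$, and no member of $\mathcal{S}$ admits a proper nonzero subobject in $\cat{E}(\mathcal{S})$ — which closes the loop and shows the two assignments are mutually inverse; combining with Theorem~\ref{th:charulbywpc} yields the stated parametrisation.
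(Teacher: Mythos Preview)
Your proposal is correct and follows the same outline as the paper: the theorem is stated as a summary of the discussion preceding it, and that discussion is exactly your two directions---from a well-placed $\cat{E}$ (containing no projectives) to its set of simple objects, and from a $\Hom$-perpendicular set to its extension closure, with Theorem~\ref{th:charulbywpc} supplying the link to universal localisations.

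The one difference worth noting is that the paper simply asserts the step you flag as the main obstacle: ``Given a $\Hom$-perpendicular set of bound finitely presented modules, its extension closure is a well-placed subcategory of $\fpmod{R}$'' is stated without argument. Your Jordan--H\"older/semibrick analysis is the right way to fill this in (a nonzero map from a member of $\mathcal{S}$ into an $\mathcal{S}$-filtered object is injective with $\mathcal{S}$-filtered cokernel, and one inducts on filtration length), so your write-up is actually more complete than the paper's on this point. Your remark that the ``alternative packaging'' via $\cat{S}(\Sigma)$ does not sidestep the issue is also accurate: the containment $\cat{S}(\Sigma)\subseteq\cat{E}(\mathcal{S})$ is precisely what needs the filtration argument.
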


We use this to obtain a better description of a module $M\tensor
R_{\cat{E}}$ for a finitely presented module $M$ over the hereditary
ring $R$. For any universal localisation, we showed (see theorem 4.2
of \cite{Scho07}) how to find the $R$ module structure of $M \tensor
R_{\Sigma}$ as a direct limit of finitely presented modules over
$R$. The maps in this direct limit are not all injective but in the
case where $R$ is a hereditary ring we can represent $M\tensor
R_{\cat{E}}$ as a direct limit of finitely presented modules where the
maps in the system are injective.

\begin{theorem}
\label{th:structfpind}
Let $R$ be a hereditary ring and let $\cat{E}$ be a well-placed
subcategory of $\fpmod{R}$ of modules of homological dimension
$1$. Let $M$ be a finitely presented module over $R$. Then the kernel
of the map from $M$ to $M\tensor R_{\cat{E}}$ is finitely generated. 

If $M$ embeds in $M\tensor R_{\cat{E}}$ then as $R$ module $M\tensor
R_{\cat{E}}$ is a directed union of modules $M_{t}\supset M$ where
$M_{t}/M \in \cat{E}$. 
\end{theorem}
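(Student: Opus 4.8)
The plan is to deduce both assertions from the presentation of $M\tensor R_{\cat{E}}$ as a direct limit of finitely presented $R$-modules given by theorem~4.2 of \cite{Scho07}, sharpened in the hereditary case using Cohn's chain conditions. I would begin by invoking Lemma~\ref{lem:reducetoinj} to assume $\Sigma$ consists of \emph{injective} maps between finitely generated projectives; in fact one may take for $\Sigma$ the length-one projective resolutions $0\to P_S\to Q_S\to S\to 0$ of the simple objects $S$ of $\cat{E}$ (these exist, with $P_S,Q_S$ finitely generated projective, because each $S$ is finitely presented of homological dimension $1$), so that $R_{\Sigma}=R_{\cat{E}}$ by Theorem~\ref{th:charwp}. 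Write $M\tensor R_{\cat{E}}=\varinjlim_i M^{(i)}$ with $M^{(0)}=M$, each $M^{(i)}$ finitely presented and transition maps $\phi_i\colon M^{(i)}\to M^{(i+1)}$, as in theorem~4.2 of \cite{Scho07}. The key preliminary observation is that, because the maps of $\Sigma$ are injective, each $\phi_i$ factors as a pushout along a finite direct sum of maps of $\Sigma$ --- which has zero kernel and cokernel a finite direct sum of objects $S$ of $\cat{E}$ --- followed by a quotient by a torsion submodule for the torsion theory generated by $\cat{E}$; hence $\ker\phi_i$ is torsion and $\cok\phi_i$, a quotient of a finite direct sum of simple objects of $\cat{E}$, lies in $\cat{E}$.

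For the kernel statement I would write $\ker(M\to M\tensor R_{\cat{E}})=\bigcup_i\ker(M\to M^{(i)})$. Each $\ker(M\to M^{(i)})$ is torsion, being the kernel of a composite of maps with torsion kernels, hence a bound submodule of $M$; since $M$ is a direct sum of a finitely generated projective and a bound finitely presented module $M_b$, and a bound submodule must project trivially onto the projective summand, this kernel lies in $M_b$. It is moreover finitely generated: its image in $M^{(i)}$ is a finitely generated submodule of a finitely presented module over a hereditary ring, hence finitely presented, so the kernel of the resulting surjection from the finitely generated module $M$ is finitely generated. By corollary~5.1.7 of \cite{CohnFIR}, $M_b$ satisfies the ascending chain condition on bound submodules, so the increasing chain $\bigl(\ker(M\to M^{(i)})\bigr)_i$ stabilises, and the kernel is finitely generated.

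For the second statement, assume $M$ embeds in $L:=M\tensor R_{\cat{E}}$, so every $\ker(M\to M^{(i)})$ is zero and each $M\to M^{(i)}$ is injective; set $M_t:=\im(M^{(t)}\to L)$. These form an increasing chain of submodules of $L$ containing $M$ with union $L$, since $L=\varinjlim_t M^{(t)}$. It remains to check $M_t/M\in\cat{E}$. Here $M$ embeds in the finitely presented module $M^{(t)}$, so $M^{(t)}/M$ is finitely presented, and the images of $\phi_0,\dots,\phi_{t-1}$ equip $M^{(t)}/M$ with a finite filtration whose successive subquotients are quotients of the modules $\cok\phi_i\in\cat{E}$; since $\cat{E}$ is closed under extensions, $M^{(t)}/M\in\cat{E}$, and then $M_t/M$, being a quotient of $M^{(t)}/M$, lies in $\cat{E}$ by closure under quotients. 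This yields the asserted description.

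The step I expect to be the main obstacle is the preliminary structural claim: that after Lemma~\ref{lem:reducetoinj} the transition maps $\phi_i$ of the system of \cite{Scho07} decompose into a zero-kernel pushout along a map of $\Sigma$ and a quotient by a torsion submodule whose cokernel lies in $\cat{E}$. This requires unwinding the construction underlying theorem~4.2 of \cite{Scho07} in the hereditary setting, in particular verifying that the maps can be taken injective up to torsion quotients. Granting it, the kernel statement is a short application of Cohn's ascending chain condition, and the second statement is bookkeeping with the closure of $\cat{E}$ under extensions and quotients.
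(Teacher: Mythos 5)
Your treatment of the kernel statement is essentially sound and close to the paper's (which simply quotes theorem 5.5 of \cite{Scho07} to identify the kernel with the torsion submodule of $M$ and then applies the ascending chain condition on bound submodules). The gap is in the second half, and it sits at the crux of the whole theorem. Twice you invoke ``closure of $\cat{E}$ under quotients'': once to conclude that the subquotients of your filtration of $M^{(t)}/M$ lie in $\cat{E}$ because they are quotients of $\cok\phi_i\in\cat{E}$, and once to conclude that $M_t/M\in\cat{E}$ because it is a quotient of $M^{(t)}/M$. But a well-placed subcategory is an exact abelian subcategory closed under extensions: it is closed under cokernels of maps \emph{between its own objects}, not under quotients by arbitrary submodules. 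Lemma \ref{lem:subnotbound} records exactly the obstruction: if $N$ is a nonzero strict submodule of a simple object $E$ of $\cat{E}$, then applying $\tensor R_{\cat{E}}$ to $\tses{N}{E}{E/N}$ gives $\Tor_{1}^{R}(E/N,R_{\cat{E}})\cong N\tensor R_{\cat{E}}\neq 0$, so $E/N\notin\cat{E}$. Your filtration subquotients are quotients of $\cok\phi_i$ by submodules arising as preimages of images of earlier stages, and nothing guarantees these submodules are images of maps from objects of $\cat{E}$; the same problem occurs with the torsion submodule you quotient out in passing from $M^{(t)}$ to $M_t=\im(M^{(t)}\to L)$.

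This is precisely where the paper has to work. It arranges a short exact sequence $\tses{M}{N_{s}}{E_{s}}$ with $E_{s}\in\cat{E}$ exactly (a good pushout, injective here), lets $T_{s}$ be the torsion submodule of $N_{s}$, and must show that the cokernel of $M\to N_{s}/T_{s}$ lies in $\cat{E}$. To do so it chooses $G_{s}\in\cat{E}$ mapping onto $T_{s}$; the induced map $G_{s}\to E_{s}$ has image $I_{s}\in\cat{E}$ and kernel $K_{s}\in\cat{E}$, and the hypothesis that $M$ is torsion-free (so $\Hom(E,M)=0$ for $E\in\cat{E}$) forces the image of $K_{s}$ in $N_{s}$ to vanish, whence $T_{s}\cong I_{s}$ and the cokernel is $E_{s}/I_{s}$, the cokernel of a map \emph{within} $\cat{E}$ and therefore in $\cat{E}$. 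That use of the torsion-freeness of $M$ is the missing idea in your argument; without it the quotients you form cannot be placed in $\cat{E}$. (Contrary to your own assessment, the ``preliminary structural claim'' is not the real obstacle --- it is exactly the good-pushout/good-surjection factorisation already supplied by \cite{Scho07}.)
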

\begin{proof}
By theorem 5.5 of \cite{Scho07} we know that the kernel of the map
from $M$ to $M\tensor R_{\cat{E}}$ is the torsion submodule of $M$
with respect to the torsion theory generated by $\cat{E}$. However,
$M$ satisfies the ascending chain condition on bound submodules and so
the torsion submodule must be finitely generated. 

Now suppose that $M$ embeds in $M\tensor R_{\cat{E}}$ so that $M$ is
torsion-free with respect to the torsion theory generated by
$\cat{E}$. By theorem 4.2 of \cite{Scho07}, $M\tensor R_{\cat{E}}$ is
a direct limit of modules $M_{s}$ where the map from $M$ to $M_{s}$ is
in the severe right Ore set generated by a set of presentations of the
modules in $\cat{E}$. Each of these factors as a good pushout
followed by a good surjection (see \cite{Scho07} for these terms). In
our present case, a good pushout is injective.

Thus we have a short exact sequence $\tses{M}{N_{s}}{E_{s}}   $ where
$E_{s} \in \cat{E}$ and a surjection from $N_{s}$ to $M_{s}$ whose
kernel is torsion with respect to the torsion theory generated by
$\cat{E}$. The inclusion of $M$ in $N_{s}$ is inverted by $\tensor
R_{\cat{E}}$ and so the image of $N_{s}$ in $M\tensor R_{\cat{E}}$ is
$N_{s}/T_{s}$ where $T_{s}$ is the torsion submodule of $N_{s}$ with
respect to the torsion theory generated by $\cat{E}$ and so $T_{s}$ is
finitely generated by the first paragraph of this proof. We note that
$M\tensor R_{\cat{E}}$ is the directed union of the modules
$N_{s}/T_{s}$ and we wish to show that the cokernel of the inclusion
of $M$ in each of these modules lies in $\cat{E}$.  

We choose some $G_{s} \in \cat{E}$ that maps onto $T_{s}$ and consider
the induced map from $G_{s}$ to $E_{s}$. Then the image $I_{s}$ must
be in $\cat{E}$ and so must the kernel $K_{s}$. Under the map from
$G_{s}$ to $N_{s}$ the image of $K_{s}$ must lie in $M$ and so must be
zero since $\Hom(E,M) =0$ for every $E\in \cat{E}$. It follows that
$T_{s}\cong I_{s}$ and so we have a short exact sequence
$\tses{M}{N_{s}/T_{s}}{E_{s}/I_{s}}$  where $E_{s}/I_{s} \in \cat{E}$
as we set out to prove.  
\end{proof}

This allows us to give a simple criterion for when two finitely
presented module over $R$ induce up to isomorphic modules over
$R_{\cat{E}}$ which is better than the results we prove for general
universal localisations in \cite{Scho07}. 

\begin{theorem}
\label{th:chariso}
Let $R$ be a hereditary ring and let $\cat{E}$ be a well-placed
subcategory of $\fpmod{R}$ all of whose modules are bound. Let $M$ and
$N$ be finitely presented module over $R$ that are torsion-free with
respect to the torsion theory generated by $\cat{E}$. Then $M\tensor
R_{\cat{E}}$ is isomorphic to $N \tensor R_{\cat{E}}$ if and only if
there exist a module $L$ and short exact sequences $\tses{M}{L}{E}$
and $\tses{N}{L}{F}$ where $E,F \in \cat{E}$. 
\end{theorem}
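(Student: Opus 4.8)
The plan is to prove the two implications separately; the reverse one is short and the forward one carries the content. For the reverse direction, suppose $L$ is given together with short exact sequences $\tses{M}{L}{E}$ and $\tses{N}{L}{F}$ with $E,F\in\cat{E}$. I would first record that every object $G$ of $\cat{E}$, being $R_{\cat{E}}$-trivial, satisfies $\Tor_1^R(G,R_{\cat{E}})=0=G\tensor R_{\cat{E}}$: this is immediate upon tensoring a finitely generated projective presentation of $G$ with $R_{\cat{E}}$, since by definition that presentation map becomes invertible. Applying $-\tensor R_{\cat{E}}$ to $\tses{M}{L}{E}$ and using this vanishing shows $M\tensor R_{\cat{E}}\to L\tensor R_{\cat{E}}$ is an isomorphism; the sequence $\tses{N}{L}{F}$ gives $N\tensor R_{\cat{E}}\cong L\tensor R_{\cat{E}}$ in the same way, and composing yields $M\tensor R_{\cat{E}}\cong N\tensor R_{\cat{E}}$.

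For the forward direction I would fix an isomorphism $\phi\colon M\tensor R_{\cat{E}}\to N\tensor R_{\cat{E}}$ and set $X=M\tensor R_{\cat{E}}$. (Note first that, a nonzero bound finitely presented module over a hereditary ring having homological dimension exactly $1$, the present $\cat{E}$ satisfies the hypotheses of Theorems~\ref{th:wpisfl} and~\ref{th:structfpind}.) Since $M$ is torsion-free it embeds in $X$, and Theorem~\ref{th:structfpind} presents $X$ as a directed union of finitely presented submodules $M_t\supseteq M$ with $M_t/M\in\cat{E}$. Transporting the corresponding statement for $N$ along $\phi^{-1}$, I would view $N$ as a finitely presented submodule $N'\subseteq X$ (with $N'\cong N$) and $X$ as a directed union of finitely presented submodules $N_u\supseteq N'$ with $N_u/N'\in\cat{E}$. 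As $N'$ is finitely generated and the $M_t$ are directed with union $X$, there is $t_0$ with $N'\subseteq M_{t_0}=:L$, and then $L/M=M_{t_0}/M\in\cat{E}$ is the first required sequence; symmetrically, $L$ being finitely generated, there is $u_0$ with $L\subseteq N_{u_0}$, so $L/N'$ embeds into $N_{u_0}/N'\in\cat{E}$. Everything then reduces to proving $L/N'\in\cat{E}$, for then $\tses{N'}{L}{L/N'}$ is the second sequence after identifying $N'$ with $N$.

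To prove $L/N'\in\cat{E}$, by Theorem~\ref{th:charulbywpc} it suffices to show $L/N'$ — which is finitely presented, hence of homological dimension at most $1$ — is $R_{\cat{E}}$-trivial, and for that it is enough to check $\Tor_1^R(L/N',R_{\cat{E}})=0$ and $(L/N')\tensor R_{\cat{E}}=0$, since then tensoring a projective presentation of $L/N'$ with $R_{\cat{E}}$ forces the presentation map to be invertible. For the $\Tor$-vanishing I would apply $-\tensor R_{\cat{E}}$ to $\tses{L/N'}{N_{u_0}/N'}{N_{u_0}/L}$: as $R$ is hereditary the $\Tor_2$ term vanishes, and as $N_{u_0}/N'\in\cat{E}$ the term $\Tor_1^R(N_{u_0}/N',R_{\cat{E}})$ vanishes, pinching $\Tor_1^R(L/N',R_{\cat{E}})$ to zero. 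For the tensor-vanishing I would show the inclusion $N'\hookrightarrow L$ is inverted by $-\tensor R_{\cat{E}}$ and then apply $-\tensor R_{\cat{E}}$ to $\tses{N'}{L}{L/N'}$: the inclusion $M\hookrightarrow X$ is the localisation unit, hence inverted, and it factors as $M\hookrightarrow L\hookrightarrow X$ with $M\hookrightarrow L$ inverted (its cokernel $L/M$ lies in $\cat{E}$), so $L\hookrightarrow X$ is inverted; likewise $N'\hookrightarrow X$ is, up to the isomorphism $N'\cong N$, the localisation unit for $N$ followed by $\phi^{-1}$, hence inverted, and factors through $L$, so $N'\hookrightarrow L$ is inverted.

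The hard part is exactly this last step, and the subtlety to watch is that the naive argument fails: $\cat{E}$ is not closed under arbitrary finitely presented submodules of its objects — a proper subobject of a simple object of $\cat{E}$ need not be $R_{\cat{E}}$-trivial — so one cannot simply deduce $L/N'\in\cat{E}$ from its being a submodule of $N_{u_0}/N'$. The resolution is to detect membership in $\cat{E}$ through the two homological vanishings, exploiting that the $\Tor$-vanishing \emph{does} pass to finitely presented submodules of $\cat{E}$-objects (because heredity of $R$ kills the obstructing $\Tor_2$), while the tensor-vanishing must be obtained separately, by factoring the manifestly invertible localisation units for $M$ and $N$ through $L$. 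The remaining points — finite presentability of $M_t$, $L=M_{t_0}$, $N_{u_0}$ and of the subquotients appearing above — are routine, using $\cat{E}\subseteq\fpmod{R}$ and coherence of the hereditary ring $R$.
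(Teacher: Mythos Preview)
Your proof is correct and follows essentially the same route as the paper's: embed $N$ into some $L=M_{t}$ via Theorem~\ref{th:structfpind}, then show $G=L/N\in\cat{E}$ by verifying $\Tor_{1}^{R}(G,R_{\cat{E}})=0=G\tensor R_{\cat{E}}$. The one minor difference is that the paper obtains the $\Tor_{1}$-vanishing more directly, observing that $L\subset M\tensor R_{\cat{E}}$ forces $\Tor_{1}^{R}(L,R_{\cat{E}})=0$ by heredity and then reading both vanishings off the single sequence $\tses{N}{L}{G}$, so your second directed system $\{N_{u}\}$ is not needed.
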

 \begin{proof}
Certainly, the existence of such short exact sequences implies that
$M\tensor R_{\cat{E}} \cong N \tensor R_{\cat{E}}$.

Now suppose that $M\tensor R_{\cat{E}} \cong N \tensor R_{\cat{E}}$.
Since $M\tensor R_{\cat{E}}$ is a directed union of modules $M_{t}$
where $M\subset M_{t}$ and $M_{t}/M \in \cat{E}$ (by the last
theorem), the isomorphism of $M\tensor R_{\cat{E}}$ and $N \tensor
R_{\cat{E}}$ gives an embedding of $N$ in $M\tensor R_{\cat{E}}$ and
hence of $N$ in some $M_{t}$; moreover, the inclusion of $N$ in
$M_{t}$ becomes an isomorphism under $\tensor R_{\cat{E}}$. 

Since $\Tor_{1}^{R}(M\tensor R_{\cat{E}}, R_{\cat{E}}) =0$ and
$M_{t}\subset M\tensor R_{\cat{E}}$, it follows that
$\Tor_{1}^{R}(M_{t}, M\tensor R_{\cat{E}}) =0$ . So applying $\tensor
R_{\cat{E}}$ to the short exact sequence $\tses{N}{M_{t}}{G}$ shows
that $G \tensor R_{\cat{E}} =0= \Tor_{1}^{R}(G, R_{\cat{E}}) $ from
which we conclude that $G$ must be an $R_{\cat{E}}$-trivial module and
hence $G \in \cat{E}$. Thus our proof is complete.  
\end{proof}

\section{Bound modules over a universal localisation} 
\label{sec:bound} 

In this section, $R$ is a hereditary ring and $\cat{E}$ is a
well-placed subcategory of $\fpmod{R}$. We shall be interested in
describing the category of finitely presented bound modules over
$R_{\cat{E}}$. We shall assume that $\cat{E}$ contains no projective
modules since we can replace $R$ by $R/T$ where $T$ is the trace ideal
of the projective modules in $\cat{E}$ which is still a hereditary
ring and we replace $\cat{E}$ by its image under $\tensor R/T$.

In order to describe our theorem we introduce some terminology. Let
$S$ be a set of modules or $\cat{S}$ the full subcategory of modules
isomorphic to a module in $S$. Then the $\Hom$-perpendicular category
to $S$ (or to $\cat{S}$), written as $\homperp{S}$ (or
$\homperp{\cat{S}}$) is the full subcategory of bound modules $X$ such
that $\Hom(X,M) =0= \Hom(M,X) $ for every $M\in S$ (or $M\in
\cat{S}$). We shall prove that the category of finitely presented
bound modules over $R_{\cat{E}}$ is equivalent to $\homperp{\cat{E}}
$.

Befor we begin, we should not that certain finitely presented modules
over $R$ cannot induce to bound modules over the universal
localisation. 

\begin{lemma}
\label{lem:subnotbound}
Let $M$ be a strict submodule of $E$, a simple object in
$\cat{E}$. Then $M\tensor R_{\cat{E}}$ is a nonzero projective module
over $R_{\cat{E}}$ 
\end{lemma}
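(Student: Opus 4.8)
The plan is to exploit the short exact sequence $\tses{M}{E}{E/M}$ arising from $M$ being a strict submodule of the simple object $E$ in $\cat{E}$. Since $\cat{E}$ is a finite length category and $E$ is simple, the quotient $E/M$ lies in $\cat{E}$ (being a factor of a module in $\cat{E}$, which is closed under images) and is nonzero because $M$ is strict. Applying $\tensor R_{\cat{E}}$ to this sequence, both $E$ and $E/M$ are $R_{\cat{E}}$-trivial, so $E\tensor R_{\cat{E}} = 0 = \Tor_1^R(E,R_{\cat{E}})$ and likewise for $E/M$. The long exact sequence in $\Tor$ then forces $M\tensor R_{\cat{E}} \cong \Tor_1^R(E/M, R_{\cat{E}})$, and in particular $M\tensor R_{\cat{E}}$ is a module over $R_{\cat{E}}$ of the form $\Tor_1$ of something $R_{\cat{E}}$-trivial.

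Next I would argue this module is projective over $R_{\cat{E}}$. Choose a projective presentation $\tses{P_1}{P_0}{E/M}$ of $E/M$ by finitely generated projectives; since $E/M$ has homological dimension at most $1$, $P_1$ is projective. Tensoring with $R_{\cat{E}}$ and using $(E/M)\tensor R_{\cat{E}} = 0$, we get that $P_1\tensor R_{\cat{E}} \to P_0\tensor R_{\cat{E}}$ is surjective with kernel $\Tor_1^R(E/M,R_{\cat{E}})$; but both $P_i\tensor R_{\cat{E}}$ are finitely generated projective over $R_{\cat{E}}$, and since $R_{\cat{E}}$ is hereditary, this kernel is projective. Thus $M\tensor R_{\cat{E}} \cong \Tor_1^R(E/M,R_{\cat{E}})$ is a finitely generated projective $R_{\cat{E}}$-module. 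For nonvanishing: if $M\tensor R_{\cat{E}}$ were zero, then combined with $\Tor_1^R(M,R_{\cat{E}}) = 0$ (which holds because $\Tor_1^R(E,R_{\cat{E}})=0$ and submodules over a hereditary ring keep $\Tor_1$ controlled — indeed $\Tor_1^R(M,R_{\cat{E}})$ embeds into $\Tor_1^R(E,R_{\cat{E}})=0$ via the sequence above combined with $\Tor_2 = 0$), $M$ would itself be $R_{\cat{E}}$-trivial, hence in $\cat{E}$; but then $M$ would be a nonzero proper subobject of the simple object $E$ inside the abelian category $\cat{E}$, a contradiction.

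The main obstacle is bookkeeping the $\Tor$ long exact sequence correctly and confirming that $\Tor_1^R(M,R_{\cat{E}})$ vanishes — one must be careful that homological dimension at most $1$ of $E$ is what makes $\Tor_2^R(E/M,R_{\cat{E}})$ and hence the relevant connecting maps behave, so that $\Tor_1^R(M,R_{\cat{E}})$ genuinely sits inside $\Tor_1^R(E,R_{\cat{E}}) = 0$. Once that is in place, the identification $M\tensor R_{\cat{E}} \cong \Tor_1^R(E/M,R_{\cat{E}})$ together with the projective presentation argument gives projectivity, and the simplicity of $E$ inside $\cat{E}$ gives nonvanishing. I would also remark that this lemma explains why, in the equivalence to come between finitely presented bound $R_{\cat{E}}$-modules and $\homperp{\cat{E}}$, strict submodules of simple objects of $\cat{E}$ must be excluded from consideration.
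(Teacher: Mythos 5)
Your overall strategy is sound and genuinely different from the paper's, but there is one false step: the claim that $E/M$ lies in $\cat{E}$ ``being a factor of a module in $\cat{E}$, which is closed under images.'' A well-placed subcategory is closed under images of maps \emph{between its own objects}; here $M$ is not an object of $\cat{E}$ (that is exactly why the lemma has content), so the cokernel of $M\hookrightarrow E$ need not lie in $\cat{E}$ --- and indeed it does not: $E/M$ is $R_{\cat{E}}$-trivial precisely when both $(E/M)\tensor R_{\cat{E}}$ and $\Tor_1^R(E/M,R_{\cat{E}})$ vanish, and your own long exact sequence identifies $\Tor_1^R(E/M,R_{\cat{E}})$ with $M\tensor R_{\cat{E}}$, which the lemma asserts is nonzero. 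As written, the two assertions ``$\Tor_1^R(E/M,R_{\cat{E}})=0$'' and ``$M\tensor R_{\cat{E}}\cong\Tor_1^R(E/M,R_{\cat{E}})$'' together prove the negation of the statement. Fortunately the error is dispensable: everything you use downstream is only $(E/M)\tensor R_{\cat{E}}=0$, which holds for the weaker reason that $\tensor R_{\cat{E}}$ is right exact and $E\tensor R_{\cat{E}}=0$ (i.e.\ $E/M$ lies in $\fac{E}$, not in $\cat{E}$). With that sentence corrected the rest is valid: tensoring a finite projective presentation of $E/M$ exhibits $\Tor_1^R(E/M,R_{\cat{E}})$ as the kernel of a surjection of finitely generated projective $R_{\cat{E}}$-modules, which splits (so you do not even need $R_{\cat{E}}$ hereditary); and $\Tor_2^R(E/M,R_{\cat{E}})=0$ over the hereditary ring $R$ gives $\Tor_1^R(M,R_{\cat{E}})=0$, so that $M\tensor R_{\cat{E}}=0$ would force $M$ into $\cat{E}$, contradicting simplicity of $E$ (assuming, as the statement intends, that $M\neq 0$).

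For comparison, the paper argues entirely on the $\Hom$/$\Ext$ side: right exactness of $\Ext(\ ,N)$ over a hereditary ring gives $\Ext(M,N)=0$ for every $R_{\cat{E}}$-module $N$, whence $\Hom_{R_{\cat{E}}}(M\tensor R_{\cat{E}},\ )\cong\Hom_R(M,\ )$ is exact on $R_{\cat{E}}$-modules, giving projectivity; and $M\tensor R_{\cat{E}}\neq 0$ because otherwise $\Hom(M,\ )$ would also vanish there and theorem 5.6 of \cite{Scho07} would place $M$ in $\cat{E}$. Your $\Tor$-side argument is the adjoint dual of this and has the bonus of identifying $M\tensor R_{\cat{E}}$ explicitly as $\Tor_1^R(E/M,R_{\cat{E}})$, anticipating Lemma \ref{lem:Tindisproj}; the paper's version avoids forming $E/M$ altogether and is correspondingly shorter.
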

\begin{proof}
  We note that $\Ext(M,\ ) =0$ on $R_{\cat{E}}$ modules because $R$ is
  hereditary and $M\subset E$ for which by assumption $\Ext(E,\ ) $
  vanishes on $R_{\cat{E}}$ modules. However, $\Hom(M,\ ) $ cannot
  vanish on $R_{\cat{E}}$ modules since $M$ is not in $\cat{E}$ and
  theorem 5.6 shows that $\cat{E}$ is the category of
  $R_{\cat{E}}$-trivial modules. But $\Hom_{R_{\cat{E}}}(M\tensor
  R_{\cat{E}},\ ) = \Hom(M,\ ) $ on $R_{\cat{E}}$ modules so that
  $M\tensor R_{\cat{E}}$ cannot be $0$. 
\end{proof}

We begin by showing that that every finitely presented bound module
over the universal localisation is induced from a module in
$\homperp{\cat{E}}$.  

\begin{theorem}
\label{th:thecore}
Let $R$ be a hereditary ring and $\cat{E}$ a well-placed
subcategory of $\fpmod{R}$ all of whose modules are bound. Then every
finitely presented bound module over $R_{\cat{E}}$ is isomorphic to a
module of the form $M\tensor R_{\cat{E}}$ where $M\in
\homperp{\cat{E}}$. 
\end{theorem}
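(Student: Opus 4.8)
Let $\bar N$ be a finitely presented bound module over $R_{\cat{E}}$. The plan is to produce the required $M$ from $\bar N$ in three stages. First I would invoke the standard description of finitely presented modules over a universal localisation at maps between finitely generated projectives (see \cite{Scho07}): every finitely presented $R_{\cat{E}}$-module is of the form $M\tensor R_{\cat{E}}$ for some $M\in\fpmod{R}$, so pick such an $M$ with $M\tensor R_{\cat{E}}\cong\bar N$. Then I would normalise $M$: replace it by $M/T$, where $T$ is the torsion submodule of $M$ for the torsion theory generated by $\cat{E}$. By Theorem \ref{th:structfpind} $T$ is finitely generated, so $M/T$ is still finitely presented; it is torsion-free, so $\Hom(E,M/T)=0$ for all $E\in\cat{E}$; and since $T\tensor R_{\cat{E}}=0$ (as $T$ is torsion and $R\to R_{\cat{E}}$ is an epimorphism) it still induces $\bar N$. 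Moreover any finitely presented module $X$ with $X\tensor R_{\cat{E}}$ bound is itself bound, since (recalling that $R$ embeds in $R_{\cat{E}}$ because $\cat{E}$ contains no nonzero projective) $\Hom_R(X,R)$ injects into $\Hom_R(X,R_{\cat{E}})=\Hom_{R_{\cat{E}}}(X\tensor R_{\cat{E}},R_{\cat{E}})=0$; in particular this $M/T$ is bound. Writing $M$ again for $M/T$, it is now finitely presented, bound, torsion-free, and induces $\bar N$, and the only remaining requirement for $M\in\homperp{\cat{E}}$ is that $\Hom(M,E)=0$ for every $E\in\cat{E}$.

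To secure this I would run a descent. Since $\cat{E}$ is a finite length category (Theorem \ref{th:wpisfl}) and $\Hom(M,-)$ is left exact, it is enough to arrange $\Hom(M,S)=0$ for each simple object $S$ of $\cat{E}$. The key observation is that any nonzero map $f\colon M\to S$ with $S$ simple in $\cat{E}$ is surjective: if not, $\im f$ is a strict submodule of the simple object $S$, so by Lemma \ref{lem:subnotbound} the module $\im f\tensor R_{\cat{E}}$ is a nonzero projective $R_{\cat{E}}$-module; but $\im f$ is a quotient of $M$, hence $\im f\tensor R_{\cat{E}}$ is a nonzero projective quotient of $\bar N$, which therefore splits off as a summand, forcing $\Hom_{R_{\cat{E}}}(\bar N,R_{\cat{E}})\neq 0$ and contradicting that $\bar N$ is bound. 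Given such a surjection $f$, replace $M$ by $\ker f$: since $S$ is $R_{\cat{E}}$-trivial, $\ker f$ still induces $\bar N$; since a kernel of a surjection between finitely presented modules over a hereditary ring is finitely presented, $\ker f$ is again finitely presented; and $\ker f\subseteq M$ is torsion-free and, being induced to a bound module, bound.

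Iterating produces a strictly descending chain $M=M^{(0)}\supsetneq M^{(1)}\supsetneq\cdots$ of bound submodules of $M$. Because $M$ is a bound finitely presented module over the hereditary ring $R$ it satisfies the descending chain condition on bound submodules (theorem 5.2.3 of \cite{CohnFIR}), so the chain terminates, say at $M^{(k)}$, and then $\Hom(M^{(k)},S)=0$ for every simple $S\in\cat{E}$, whence $\Hom(M^{(k)},E)=0$ for all $E\in\cat{E}$ by induction on the length of $E$ in $\cat{E}$. Thus $M^{(k)}$ is finitely presented, bound, torsion-free (so $\Hom(E,M^{(k)})=0$), satisfies $\Hom(M^{(k)},E)=0$, and has $M^{(k)}\tensor R_{\cat{E}}\cong\bar N$; hence $M^{(k)}\in\homperp{\cat{E}}$ and induces $\bar N$, as required. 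I expect the crux to be the finiteness of this descent — resting on Cohn's descending chain condition for bound modules together with the use of Lemma \ref{lem:subnotbound} to make every nonzero map into $\cat{E}$ surjective — plus the routine but necessary check that the successive kernels stay finitely presented.
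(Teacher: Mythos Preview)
Your proof is correct and follows essentially the same route as the paper's: reduce to a finitely presented torsion-free bound $M$ inducing the given module, use Lemma~\ref{lem:subnotbound} to force every nonzero map from $M$ to a simple object of $\cat{E}$ to be surjective, then descend through kernels and terminate by Cohn's descending chain condition. The only cosmetic differences are that you justify boundedness via the injection $\Hom_R(M,R)\hookrightarrow\Hom_{R_{\cat{E}}}(M\tensor R_{\cat{E}},R_{\cat{E}})$ whereas the paper uses the projective--bound splitting of finitely presented modules, and you spell out explicitly that the successive kernels remain finitely presented and bound, points the paper leaves implicit.
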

\begin{proof}
Every finitely presented module over $R_{\cat{E}}$ is isomorphic to a
module of the form $M\tensor R_{\cat{E}}$ where $M\in \fpmod{R}$ and
$\Hom(E,M) =0$ for every $E\in \cat{E}$ by theorem \ref{th:structfpind}.
Clearly, $M$ is a bound module since $P \tensor R_{\cat{E}}$ is
nonzero for any projective module and we are assuming that $M\tensor
R_{\cat{E}}$ is bound. 

Now suppose that $\Hom(M,E) \neq 0$ for some module $E\in
\cat{E}$. Then since $\cat{E}$ is a finite length category we may
assume that $E$ is a simple object in $\cat{E}$. If the homomorphism
is not surjective then we obtain a short exact sequence
$\tses{K}{M}{F}$ where $F$ is a strict submodule of $E$ and so $F
\tensor R_{\cat{E}}$ is nonzero projective by lemma
\ref{lem:subnotbound}. Therefore, applying $\tensor R_{\cat{E}}$ to
our short exact sequence shows that $M\tensor R_{\cat{E}}$ is not
bound. This contradiction implies that we have a surjection from $M$
to $E$ whenever $E$ is a simple object in $\cat{E}$ and there is a
nonzero homomorphism from $M$ to $E$. Thus we have a short exact
sequence $\tses{M_{1}}{M}{E_{1}}$ whenever $M$ has a nonzero
homomorphism to some module in $\cat{E}$. From this short exact
sequence, we see that $M \tensor R_{\cat{E}} \cong M_{1} \tensor
R_{\cat{E}}$. Applying this argument as many times as necessary we
either obtain a submodule $M_{n}\subset M$ such that $M_{n} \tensor
R_{\cat{E}} \cong M\tensor R_{\cat{E}}$ and $\Hom(M_{n},E) =0$ for
every $E\in \cat{E}$ or else a strictly descending sequence of
submodules of $M$, $M\supset M_{1}\supset \dots M_{n}\dots $ where
$M/M_{n}\in \cat{E}$ and $M_{n} \tensor R_{\cat{E}} \cong M\tensor
R_{\cat{E}}$. However, any bound module over a hereditary ring
satisfies the descending chain condition on submodules whose factor is
bound and so the second case cannot arise. Thus we have some submodule
$M'\subset M$ such that $M' \tensor R_{\cat{E}} \cong M\tensor
R_{\cat{E}}$ and $\Hom(M',E) =0$ for every $E\in \cat{E}$. But $M'$
must also satisfy $\Hom(E,M) =0$ for every $E\in \cat{E}$. So $M'\in
\homperp{\cat{E}}$ which completes our proof.
\end{proof}

Next we restrict the $R_{\cat{E}}$ homomorphisms from a module
$M\tensor R_{\cat{E}}$ where $M \in \homperp{\cat{E}}$. 

\begin{lemma}
\label{lem:restrict}
Let $R$ be a hereditary ring and $\cat{E}$ a well-placed subcategory
of $\fpmod{R}$ all of whose modules are bound. Let $M\in
\homperp{\cat{E}}$ and let $N$ be some finitely presented module over
$R$ torsion-free with respect to the torsion theory generated by
$\cat{E}$. Let $\map{\phi}{M\tensor R_{\cat{E}}}{N\tensor
  R_{\cat{E}}}$ be a homomorphism of $R_{\cat{E}}$ modules. Then $\phi
= f \tensor R_{\cat{E}}$ for some $\map{f}{M}{N}$.
\end{lemma}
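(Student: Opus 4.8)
The plan is to prove that the canonical map $\Hom_R(M,N)\to\Hom_{R_{\cat{E}}}(M\tensor R_{\cat{E}},N\tensor R_{\cat{E}})$ sending $f$ to $f\tensor R_{\cat{E}}$ is surjective. The idea is to lift $\phi$ to an $R$-module homomorphism with the help of the explicit description of $N\tensor R_{\cat{E}}$ supplied by Theorem \ref{th:structfpind}, and then to correct the lift so that it takes values in $N$, using the hypothesis $M\in\homperp{\cat{E}}$.

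First I would record two consequences of Theorem \ref{th:structfpind} (which applies since every module in $\cat{E}$, being bound over a hereditary ring, has homological dimension $1$). On the one hand, $M$ is torsion-free with respect to the torsion theory generated by $\cat{E}$: by the definition of $\homperp{\cat{E}}$ we have $\Hom(E,M)=0$ for every $E\in\cat{E}$, so the torsion submodule of $M$, which is the kernel of $M\to M\tensor R_{\cat{E}}$, is zero, and the map $\map{\eta_M}{M}{M\tensor R_{\cat{E}}}$, $m\mapsto m\tensor 1$, is injective. On the other hand, since $N$ is torsion-free by hypothesis, $N\tensor R_{\cat{E}}$ is the directed union of modules $N_t$ with $N\subseteq N_t$ and $N_t/N\in\cat{E}$, the inclusions $N\subseteq N_t\subseteq N\tensor R_{\cat{E}}$ being the evident ones.

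Next, consider the $R$-linear composite $M\to M\tensor R_{\cat{E}}\to N\tensor R_{\cat{E}}$ whose first map is $\eta_M$ and whose second map is $\phi$. Its image is a finitely generated $R$-submodule of $N\tensor R_{\cat{E}}=\bigcup_t N_t$, hence is contained in some single $N_t$ by directedness; corestricting, we obtain $\map{g}{M}{N_t}$. Composing $g$ with the quotient map $N_t\to N_t/N$ yields a map $M\to N_t/N$, and since $N_t/N\in\cat{E}$ and $\Hom(M,E)=0$ for every $E\in\cat{E}$, this composite is zero. Therefore $g$ factors through the inclusion $\map{j}{N}{N_t}$, giving $\map{f}{M}{N}$ with $j\circ f=g$.

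It remains to check that $f\tensor R_{\cat{E}}=\phi$. Both sides are $R_{\cat{E}}$-linear maps $M\tensor R_{\cat{E}}\to N\tensor R_{\cat{E}}$, and the image of $\eta_M$ generates $M\tensor R_{\cat{E}}$ as an $R_{\cat{E}}$-module, so it suffices to compare the two maps on elements $m\tensor 1$. There $(f\tensor R_{\cat{E}})(m\tensor 1)$ is the image of $f(m)\in N$, equivalently of $g(m)=j(f(m))\in N_t$, under $N_t\hookrightarrow N\tensor R_{\cat{E}}$, and by the construction of $g$ this equals $\phi(\eta_M(m))=\phi(m\tensor 1)$. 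Hence $\phi=f\tensor R_{\cat{E}}$, as claimed. The one point requiring mild care is the passage from the finitely generated module $M$ to a single term $N_t$ of the directed union together with the compatibility of the inclusions into $N\tensor R_{\cat{E}}$; both are immediate from Theorem \ref{th:structfpind}, and the genuinely essential step is the vanishing $\Hom(M,N_t/N)=0$ coming from $M\in\homperp{\cat{E}}$, which is what forces the lift $g$ down into $N$.
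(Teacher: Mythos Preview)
Your proof is correct and follows essentially the same approach as the paper's: use Theorem~\ref{th:structfpind} to land the image of $M$ in some $N_t$, kill the map to $N_t/N$ via $\Hom(M,\cat{E})=0$, and conclude $\phi=f\tensor R_{\cat{E}}$ because both agree on the generating image of $M$. Your version is simply more explicit about the finitely-generated-image argument and the final verification, and you add the (not strictly necessary) remark that $M$ itself is torsion-free.
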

\begin{proof}
We showed in theorem \ref{th:structfpind} that 
$N \tensor R_{\cat{E}}$ is isomorphic to a direct
limit of modules $N_{t}\supset N$ such that $N_{t}/N \in \cat{E}$. So
the image of $\phi$ must lie in some $N_{t}$. Since $N_{t}/N \in
\cat{E}$ and $M\in \homperp{\cat{E}}$, $\Hom(M, N_{t}/N) =0$ and so
the image of $M$ lies in $N$. Let $\map{f}{M}{N}$ be this induced
map. Then $\phi- f \tensor R_{\cat{E}}$ vanishes on $M$ and so must be
the zero map as required.
\end{proof}

We note a consequence of this.

\begin{lemma}
\label{lem:indfromhompisbound}
Let $M \in \homperp{\cat{E}}$. Then $M\tensor R_{\cat{E}}$ is bound
\end{lemma}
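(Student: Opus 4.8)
The plan is to show that $M\tensor R_{\cat{E}}$ has no nonzero homomorphisms to any finitely generated projective $R_{\cat{E}}$ module, which over the hereditary ring $R_{\cat{E}}$ is equivalent to being bound (using that every finitely presented $R_{\cat{E}}$ module splits as a bound module plus a projective, and that a nonzero projective summand would receive a nonzero map to $R_{\cat{E}}$). First I would recall that any finitely generated projective $R_{\cat{E}}$ module is a summand of a free module, so it suffices to show $\Hom_{R_{\cat{E}}}(M\tensor R_{\cat{E}}, R_{\cat{E}}) = 0$, and by additivity $\Hom_{R_{\cat{E}}}(M\tensor R_{\cat{E}}, R_{\cat{E}}^{n}) = 0$ as well. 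Equivalently, if $M\tensor R_{\cat{E}} = B \directsum P$ with $P$ a nonzero finitely generated projective, then $P$ would admit a nonzero map to $R_{\cat{E}}$, contradicting boundedness; so it is enough to kill all maps to $R_{\cat{E}}$.

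The key move is to apply Lemma \ref{lem:restrict}. Note that $R_{\cat{E}}$ itself, viewed as a module over itself, is of the form $N\tensor R_{\cat{E}}$ with $N = R$, and $R$ is torsion-free with respect to the torsion theory generated by $\cat{E}$ precisely because we are assuming $R$ embeds in $R_{\cat{E}}$ (equivalently, $\cat{E}$ contains no nonzero projective, so $\Hom(E,R)=0$ for $E\in\cat{E}$ forces the torsion submodule of $R$ to be zero — in fact $R$ being torsion-free is exactly Theorem \ref{th:structfpind}'s running hypothesis). Hence any $R_{\cat{E}}$-homomorphism $\map{\phi}{M\tensor R_{\cat{E}}}{R_{\cat{E}}}$ is of the form $f\tensor R_{\cat{E}}$ for some $\map{f}{M}{R}$. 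But $M\in\homperp{\cat{E}}$ is a bound $R$ module, so $\Hom(M,R)=0$, forcing $f=0$ and therefore $\phi=0$. This shows $\Hom_{R_{\cat{E}}}(M\tensor R_{\cat{E}},R_{\cat{E}})=0$, and the same argument with $R^{n}$ in place of $R$ (again $R^{n}$ is torsion-free and $\Hom(M,R^{n})=0$) shows no map to any free module, hence none to any finitely generated projective.

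From here I would finish by invoking the structure of finitely presented modules over the hereditary ring $R_{\cat{E}}$: writing $M\tensor R_{\cat{E}} = B\directsum P$ with $B$ bound and $P$ finitely generated projective, the projection onto $P$ followed by any embedding of $P$ into a free module would be a nonzero homomorphism from $M\tensor R_{\cat{E}}$ to a free $R_{\cat{E}}$ module unless $P=0$; so $P=0$ and $M\tensor R_{\cat{E}} = B$ is bound. The one point requiring care — and the main obstacle — is verifying the hypotheses of Lemma \ref{lem:restrict} apply with $N$ a free module: the lemma as stated takes $N$ finitely presented and torsion-free, and $R^{n}$ qualifies on both counts under our standing assumption that $R$ embeds in $R_{\cat{E}}$; once that is in hand, everything is formal. (Alternatively, and perhaps more cleanly, one can avoid free modules of rank $> 1$ by noting $\Hom_{R_{\cat{E}}}(M\tensor R_{\cat{E}}, Q) \directsum \Hom_{R_{\cat{E}}}(M\tensor R_{\cat{E}}, Q') = \Hom_{R_{\cat{E}}}(M\tensor R_{\cat{E}}, Q\directsum Q')$ reduces the vanishing on a projective $Q$ to the vanishing on $R_{\cat{E}}$ together with a complementary summand, and iterating reduces to the rank-one case already handled.)
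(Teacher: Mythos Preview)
Your argument is correct and follows the same route as the paper: apply Lemma~\ref{lem:restrict} with $N=R$ (which is finitely presented and torsion-free under the standing hypothesis) to obtain $\Hom_{R_{\cat{E}}}(M\tensor R_{\cat{E}}, R_{\cat{E}}) = \Hom_{R}(M,R) = 0$ since $M$ is bound. The paper's proof is just this one line; your third paragraph (the decomposition $B\directsum P$ and the discussion of higher-rank free modules) is unnecessary, since by definition a module is bound precisely when $\Hom(-,R_{\cat{E}})=0$, so you are already done after the second paragraph.
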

\begin{proof}
From the last lemma we see that $\Hom_{R_{\cat{E}}}(M\tensor
R_{\cat{E}}, R_{\cat{E}}) = \Hom_{R}(M,R) =0$. 
\end{proof}

We conclude the main theorem of this section. 

\begin{theorem}
\label{th:boundeqhomperp}
Let $R$ be a hereditary ring and $\cat{E}$ a well-placed subcategory
of $\fpmod{R}$ all of whose modules are bound. Then $\tensor
R_{\cat{E}}$ induces an equivalence of categories between
$\homperp{\cat{E}}$ and the full subcategory of finitely presented
bound modules over $R_{\cat{E}}$.
\end{theorem}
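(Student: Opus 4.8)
The plan is to assemble the functor $\tensor R_{\cat{E}}$ from $\homperp{\cat{E}}$ to the category of finitely presented bound $R_{\cat{E}}$-modules and verify it is full, faithful, and essentially surjective, drawing on the three preparatory results just established. First I would note that the functor is well-defined on objects by Lemma~\ref{lem:indfromhompisbound} (which guarantees $M\tensor R_{\cat{E}}$ is bound when $M\in\homperp{\cat{E}}$) and lands among finitely presented $R_{\cat{E}}$-modules since $M$ is finitely presented. Essential surjectivity is exactly Theorem~\ref{th:thecore}: every finitely presented bound module over $R_{\cat{E}}$ is of the form $M\tensor R_{\cat{E}}$ with $M\in\homperp{\cat{E}}$.

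For fullness, given $M,N\in\homperp{\cat{E}}$ and an $R_{\cat{E}}$-homomorphism $\map{\phi}{M\tensor R_{\cat{E}}}{N\tensor R_{\cat{E}}}$, I would invoke Lemma~\ref{lem:restrict}: since $M\in\homperp{\cat{E}}$ and $N$ is torsion-free with respect to the torsion theory generated by $\cat{E}$ (modules in $\homperp{\cat{E}}$ are bound and have no nonzero homomorphism from any $E\in\cat{E}$, hence are torsion-free), $\phi$ is induced by some $\map{f}{M}{N}$, i.e.\ $\phi = f\tensor R_{\cat{E}}$. This is the crux of fullness and it is handed to us directly.

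Faithfulness is the step I expect to require the most care, though it should still be short. Suppose $\map{f}{M}{N}$ with $f\tensor R_{\cat{E}} = 0$, where $M,N\in\homperp{\cat{E}}$. Since $M$ is torsion-free, the natural map $M\to M\tensor R_{\cat{E}}$ is injective, so $f$ factors through $\kernel{(M\to M\tensor R_{\cat{E}})}$... no — rather, $f\tensor R_{\cat{E}}=0$ together with $N\hookrightarrow N\tensor R_{\cat{E}}$ forces the image of $f$ in $N$ to be killed by $\tensor R_{\cat{E}}$. So $\im(f)$ is a finitely generated submodule of the torsion-free module $N$ that becomes zero under $\tensor R_{\cat{E}}$; hence $\im(f)$ is torsion with respect to the torsion theory generated by $\cat{E}$ (by Theorem 5.5 of \cite{Scho07}, the kernel of $N\to N\tensor R_{\cat{E}}$ is the torsion submodule), but $N$ is torsion-free, so $\im(f)=0$ and $f=0$.

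Having checked that $\tensor R_{\cat{E}}$ is a full, faithful, essentially surjective functor, I would conclude it is an equivalence of categories onto the full subcategory of finitely presented bound $R_{\cat{E}}$-modules. The main obstacle, such as it is, lies in being precise about why objects of $\homperp{\cat{E}}$ are torsion-free (so that Lemma~\ref{lem:restrict} and the faithfulness argument both apply): one must observe that a bound module $M$ with $\Hom(E,M)=0$ for all $E\in\cat{E}$ has zero $\cat{E}$-torsion submodule, since that torsion submodule would be a nonzero factor of some module in $\cat{E}$ mapping into $M$, contradicting $\Hom(\cat{E},M)=0$ as $\cat{E}$ is closed under images.
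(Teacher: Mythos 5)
Your proposal is correct and follows essentially the same route as the paper: Theorem~\ref{th:thecore} and Lemma~\ref{lem:indfromhompisbound} for essential surjectivity and well-definedness, Lemma~\ref{lem:restrict} for fullness, and injectivity of $N\hookrightarrow N\tensor R_{\cat{E}}$ for faithfulness (the paper dispatches the last point in one line, whereas your torsion detour is unnecessary but harmless). Your closing observation that objects of $\homperp{\cat{E}}$ are torsion-free, so that Lemma~\ref{lem:restrict} applies, is a detail the paper leaves implicit and is worth having spelled out.
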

\begin{proof}
  We have already seen in theorem \ref{th:thecore} and lemma
  \ref{lem:indfromhompisbound} that $\tensor R_{\cat{E}}$ induces a
  functor from $\homperp{\cat{E}}$ to the full subcategory of finitely
  presented bound modules over $R_{\cat{E}}$ that is
  surjective on isomorphism classes of objects.

  However, by lemma \ref{lem:restrict}, any homomorphism from
  $M\tensor R_{\cat{E}}$ to $N\tensor R_{\cat{E}}$ for $M,N \in
  \homperp{\cat{E}}$ must be of the form $f \tensor R_{\cat{E}}$ for
  some $\map{f}{M}{N}$. Thus $\tensor R_{\cat{E}}$ is surjective on
  $\Hom$-sets as a functor from $\homperp{\cat{E}}$ to the full
  subcategory of finitely presented bound modules over
  $R_{\cat{E}}$. However, it is visibly injective since $N$ is a
  submodule of $N\tensor R_{\cat{E}}$. Thus $\tensor R_{\cat{E}}$
  induces the stated equivalence.
\end{proof}

In some ways, this result is very surprising although its proof is
simple enough. One consequence is that the category of finitely
presented bound modules over the free associative algebra over a field
is equivalent to a category of finite dimensional representations over
a generalised Kronecker quiver since a $2$ by $2$ matrix ring over the
free associative algebra is a universal localisation of the path
algebra of a generalised Kronecker quiver. Perhaps more shocking is
that the category of finitely presented bound modules over the
universal algebra with an isomorphism from the free module of rank $1$
to the free module of rank $2$ is equivalent to a category of finite
dimensional representations over a generalised Kronecker quiver
because it too is a universal localisation of a free algebra and hence
Morita equivalent to a universal localisation of the path algebra of a
generalised Kronecker quiver. Thus the category of finitely presented
bound modules over  a hereditary ring has no knowledge of the
pathology of the finitely generated projective modules.

\section{Finitely generated projective modules} 
\label{sec:proj} 

We wish to calculate the finitely generated projective modules over
the universal localisation of a hereditary ring. We begin by pointing
out that the map from $K_{0}(R) $ to $K_{0}(R_{\cat{E}})$ is
surjective which is not guaranteed for general rings.

\begin{lemma}
\label{K0ok}
The map from $K_{0}(R)$ to $K_{0}(R_{\cat{E}})$ is surjective.
\end{lemma}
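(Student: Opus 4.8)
The plan is to show that every finitely generated projective $R_{\cat{E}}$-module is a direct summand of a free module, and that the free modules are visibly in the image, then to bootstrap from there. The core fact I would use is that $R_{\cat{E}}$ is a universal localisation, so every finitely presented $R_{\cat{E}}$-module is of the form $M \tensor R_{\cat{E}}$ for some finitely presented $R$-module $M$ (this is essentially the content already invoked in theorem \ref{th:structfpind}, inherited from theorem 4.2 of \cite{Scho07}). In particular a finitely generated projective $R_{\cat{E}}$-module $Q$ can be written as $Q \cong M \tensor R_{\cat{E}}$ with $M$ finitely presented over $R$.

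First I would invoke the structure of finitely presented modules over a hereditary ring: $M \cong P \directsum B$ where $P$ is finitely generated projective over $R$ and $B$ is a bound module. Tensoring, $Q \cong (P \tensor R_{\cat{E}}) \directsum (B \tensor R_{\cat{E}})$. Now $P \tensor R_{\cat{E}}$ is a finitely generated projective $R_{\cat{E}}$-module that is manifestly in the image of $K_0(R) \to K_0(R_{\cat{E}})$, namely the image of $[P]$. So it suffices to deal with the summand $B \tensor R_{\cat{E}}$. The key point is that since $Q$ is projective and $P \tensor R_{\cat{E}}$ is projective, $B \tensor R_{\cat{E}}$ is a direct summand of the projective module $Q$, hence itself finitely generated projective over $R_{\cat{E}}$.

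Next I would pin down what a bound module becomes after localisation. Over the hereditary ring $R$, the bound module $B$ has a projective resolution $0 \to P_1 \to P_0 \to B \to 0$ with $P_0, P_1$ finitely generated projective. Tensoring with $R_{\cat{E}}$ gives $P_1 \tensor R_{\cat{E}} \to P_0 \tensor R_{\cat{E}} \to B \tensor R_{\cat{E}} \to 0$; since $R_{\cat{E}}$ is flat over $R$ on the left (or using $\Tor_1$ vanishing as in the proof of theorem \ref{th:chariso}) this is exact, so $[B \tensor R_{\cat{E}}] = [P_0 \tensor R_{\cat{E}}] - [P_1 \tensor R_{\cat{E}}]$ in $K_0(R_{\cat{E}})$ provided $B \tensor R_{\cat{E}}$ is projective — which we arranged in the previous step. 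Hence $[B \tensor R_{\cat{E}}]$ is the image of $[P_0] - [P_1] \in K_0(R)$. Adding the two summands, $[Q] = [P \directsum P_0] - [P_1]$ lies in the image of $K_0(R)$.

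**The main obstacle** I anticipate is justifying that $B \tensor R_{\cat{E}}$ really is projective rather than merely finitely presented: a priori the decomposition $Q \cong (P\tensor R_{\cat{E}}) \directsum (B \tensor R_{\cat{E}})$ only says $B\tensor R_{\cat{E}}$ is a summand of the projective $Q$ \emph{after} one knows $P \tensor R_{\cat{E}}$ splits off, which it does since $P$ is a summand of $M$ and tensoring preserves split exact sequences. Once that is in hand the exactness of the localised resolution (flatness of $R_{\cat{E}}$ as a right $R$-module, equivalently $\Tor_1^R(R_{\cat{E}}, B) = 0$, which follows from $R$ hereditary together with the universal localisation property that inverted maps stay injective) is routine. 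A cleaner alternative, avoiding even the hereditary decomposition of $M$, is: take any $Q$ f.g.\ projective over $R_{\cat{E}}$, write $Q \directsum Q' \cong R_{\cat{E}}^n$ for some $n$, note $R_{\cat{E}}^n$ is the image of $[R^n]$, and observe $[Q] = [R_{\cat{E}}^n] - [Q']$ — but this is circular since $[Q']$ is not obviously in the image either, so the resolution argument above is the one I would actually write down.
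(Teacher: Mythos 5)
Your overall strategy is the paper's: realise the projective $R_{\cat{E}}$-module as $M\tensor R_{\cat{E}}$ for a finitely presented $R$-module $M$, resolve $M$ by finitely generated projectives over the hereditary ring $R$, tensor the resolution, and read off the class in $K_{0}$. (The intermediate decomposition $M\cong P\directsum B$ is harmless but unnecessary; you can resolve $M$ directly.) However, there is a genuine gap at the one point where care is needed: the vanishing of $\Tor_{1}^{R}(B,R_{\cat{E}})$. You assert this follows from ``flatness of $R_{\cat{E}}$'' or from ``$R$ hereditary together with the fact that inverted maps stay injective.'' Neither is true. Universal localisations of hereditary rings are not flat in general, and $\Tor_{1}^{R}(-,R_{\cat{E}})$ does not vanish on arbitrary finitely presented bound modules: by lemma \ref{lem:Tindisproj}, a proper factor $T=E/K$ of a module $E\in\cat{E}$ has $\Tor_{1}^{R}(T,R_{\cat{E}})\cong K\tensor R_{\cat{E}}$, which by lemma \ref{lem:subnotbound} is a \emph{nonzero} projective when $K$ is a nonzero strict submodule of a simple object of $\cat{E}$. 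Since an arbitrary representative $M$ with $M\tensor R_{\cat{E}}\cong Q$ can contain such a $T$ as a summand (it changes neither $M\tensor R_{\cat{E}}$ nor finite presentation), your tensored sequence need not be exact on the left, and the extra term $\Tor_{1}^{R}(B,R_{\cat{E}})$ would contaminate the $K_{0}$ computation with a class not yet known to come from $K_{0}(R)$.

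The fix is exactly the move the paper makes and which you omitted: choose the representative $M$ so that $M$ embeds in $M\tensor R_{\cat{E}}$. This is always possible — by theorem \ref{th:structfpind} the kernel of $M\to M\tensor R_{\cat{E}}$ is finitely generated, so replacing $M$ by its image (still finitely presented, as $R$ is coherent) does not change the induced module. For such an $M$ one has $\Tor_{1}^{R}(M,R_{\cat{E}})=0$: $M$ is a submodule of the $R_{\cat{E}}$-module $M\tensor R_{\cat{E}}$, which satisfies $\Tor_{1}^{R}(M\tensor R_{\cat{E}},R_{\cat{E}})=0$, and since $R$ is hereditary $\Tor_{2}^{R}$ vanishes, so $\Tor_{1}^{R}(A,R_{\cat{E}})\hookrightarrow\Tor_{1}^{R}(B,R_{\cat{E}})$ whenever $A\subset B$ (this is the argument used in the proof of theorem \ref{th:chariso}). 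With that choice in place, tensoring $\tses{P_{1}}{P_{0}}{M}$ stays exact, the resulting sequence of $R_{\cat{E}}$-modules splits because $M\tensor R_{\cat{E}}$ is projective, and $[M\tensor R_{\cat{E}}]$ is the image of $[P_{0}]-[P_{1}]$, as you intended.
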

\begin{proof}
Every finitely presented module over $R_{\cat{E}}$ is induced. 
So given a finitely generated projective module over $R_{\cat{E}}$ we
can write it in the form $M\tensor R_{\cat{E}}$ for some finitely
presented module $M$ that
embeds in $M\tensor R_{\cat{E}}$. It follows that $\Tor_{1}^{R}(M,
R_{\cat{E}}) =0$. We consider a short exact sequence $\tses{P}{Q}{M}$
where $P,Q$ are finitely generated projective modules over
$R$. Tensoring gives a short exact sequence $\tses{P \tensor
  R_{\cat{E}}}{Q \tensor R_{\cat{E}}}{M\tensor R_{\cat{E}}}$ and
completes the proof of the lemma.
\end{proof}
 
Thus we know what the $K$-theory of the universal localisation is and
so we know the finitely generated projective modules stably but we can
get far more precise information; in principle, we can determine the
abelian monoid of the finitely generated projective modules up to
isomorphism. In order to describe our main result, we need to
introduce a couple of categories and a construction.

Let $R$ be a hereditary ring and let $\cat{E}$ be a well-placed
subcategory of $\fpmod{R}$. Our first category is the full subcategory
of $\fpmod{R} $ whose objects are those modules $M$ such that
$\Ext(M,\ ) $ vanishes on $E^{\perp}$, the category of $R_{\cat{E}}$
modules in $\Mod{R} $; we call this $\relproj{E^{\perp}}$, the category of
finitely presented modules projective with respect to
$E^{\perp}$. This category is clearly closed under extensions and
submodules and contains the finitely generated projective modules and
$\cat{E}$; we shall see shortly that it is the smallest such full
subcategory.  Our second category is the full subcategory of
$\fpmod{R} $ whose objects are those modules $M$ such that $\Hom(M,\ )
$ vanishes on $E^{\perp}$; equivalently, $M\tensor R_{\cat{E}}=0$ from
which we see that it is precisely the full subcategory of $\fpmod{R} $
whose objects are the finitely presented modules that are factors of
modules in $\cat{E}$; equivalently, these are the finitely presented
torsion modules with respect to the torsion theory generated by
$\cat{E}$. We shall refer to this category $\fac{E} $ as the category
of factors of $\cat{E}$. 
Both of these categories are exact categories where the
short exact sequences are all short exact sequences in $\fpmod{R} $
whose objects lie in the relevant full subcategory.

Given an exact category $\cat{F} $ we associate to this an abelian
monoid $\abmon{\cat{F}} $ generated by the isomorphism classes of
objects in $\cat{F}$ and whose relations are given by the short exact
sequences in $\cat{F}$; that is, given a short exact sequence
$\tses{A}{B}{C}$  in $\cat{F}$, we have a relation $[B]=[A]+[C]$. 
In the special case where $\cat{F}$ is $\fgproj{R} $,
$\abmon{\fgproj{R}}$ is also known as $P_{\directsum}(R) $ and here
the elements of the abelian monoid are just the isomorphism classes of
objects in $\fgproj{R}$. In general, however, the map from the set of
isomorphism classes of objects in $\cat{F}$ to $\abmon{\cat{F}}$ is
only surjective.

Since $R$ is a hereditary ring we can show easily that
$\Tor_{1}^{R}(M, R_{\cat{E}}) =0$ for $M$ in $\relproj{E^{\perp}}$ so that
$\tensor R_{\cat{E}}$ is an exact functor from $\relproj{E^{\perp}}$ to
$\fpmod{R_{\cat{E}}}$ and in fact as we shall show to
$\fgproj{R_{\cat{E}}}$. Similarly, since $\tensor R_{\cat{E}}$
vanishes on $\fac{E}$, $\Tor_{1}^{R}(\ , R_{\cat{E}}) $ is an exact
functor from $\fac{E}$ to $\Mod{R} $ and in fact to
$\fgproj{R_{\cat{E}}}$. In the first case, the map on objects will be
shown to be surjective and hence we have a surjective map of abelian
monoids from $\abmon{\relproj{E^{\perp}}} $ and a map from $\abmon{\fac{E}}
$ to $P_{\directsum}(R_{\cat{E}}) $. In both cases, we shall show that
the additional relations in each case take the form $[E]=0$ for every
$E\in \cat{E}$.

Although this answer is useful, it helps in practice to have other
techniques available. First of all, one can show that every finitely
generated projective module over $ R_{\cat{E}}$ is isomorphic to a
direct sum of modules of the form $ S \tensor R_{\cat{E}}$ where
either $S$ is a submodule of some simple object in $\cat{E}$ (recall
that $\cat{E}$ is a finite length category) or else $S$ is itself a
finitely generated projective module. Secondly we show that that every
module in $\relproj{E^{\perp}}$ has a finite filtration such that the
subfactors are either finitely generated projective or else have the
property that any homomorphism to a module in $\cat{E}$ must be
injective. This last kind of module is sensible to work with since
these are the modules for which $S \tensor R_{\cat{E}}$ is finitely
generated projective and it is possible that $S \tensor R_{\cat{E}}$
is decomposable. If $S$ and $T$ are two such modules, we find a useful
way to investigate whether or not $S \tensor R_{\cat{E}}$ and $T
\tensor R_{\cat{E}}$ are isomorphic (which is a little technical to
describe at this moment). This is most useful when we know that there
are indecomposable finitely generated projective modules over the
universal localisation.

We begin with our first description of $P_{\directsum}(R_{\cat{E}})$.
We need to develop the properties of the categories $\relproj{E^{\perp}}$
and $\fac{E}$. 

We first need to show that the modules in $\relproj{E^{\perp}}$ induce up to
finitely generated projective modules over $R_{\cat{E}}$.

\begin{lemma}
\label{lem:PEptoproj}
Let $M\in \relproj{E^{\perp}}$. Then $M\tensor R_{\cat{E}}$ is a finitely
generated projective module over $ R_{\cat{E}}$ and this module is $0$
if and only if $M\in \cat{E}$. More generally, the kernel of the
homomorphism from $M$ to $M\tensor R_{\cat{E}}$ is in $\cat{E}$ and
the image of $M$ in $M\tensor R_{\cat{E}}$ also lies in
$\relproj{E^{\perp}}$.
\end{lemma}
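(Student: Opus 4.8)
The plan is to establish the four claims of Lemma~\ref{lem:PEptoproj} in the order: (1) $\Tor_1^R(M,R_{\cat{E}})=0$, hence $\tensor R_{\cat{E}}$ is exact on short exact sequences in $\relproj{E^{\perp}}$; (2) $M\tensor R_{\cat{E}}$ is finitely generated projective; (3) $M\tensor R_{\cat{E}}=0$ iff $M\in\cat{E}$; (4) the kernel of $M\to M\tensor R_{\cat{E}}$ lies in $\cat{E}$ and the image lies in $\relproj{E^{\perp}}$.

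For (1), I would take a short exact sequence $\tses{P}{Q}{M}$ with $P,Q$ finitely generated projective over $R$ (possible since $R$ is hereditary, so $M$ has projective dimension $\le 1$); tensoring with $R_{\cat{E}}$ gives $\Tor_1^R(M,R_{\cat{E}})\hookrightarrow P\tensor R_{\cat{E}}\to Q\tensor R_{\cat{E}}$, and I want to argue this $\Tor$ vanishes. The cleanest route: $\Tor_1^R(M,R_{\cat{E}})$ is an $R_{\cat{E}}$-module, and for $N\in E^\perp$ (an $R_{\cat{E}}$-module viewed over $R$) the vanishing of $\Ext_R(M,N)$ by hypothesis, combined with the universal property of $R_{\cat{E}}$, forces $\Tor_1$ to be zero — more concretely, since $P\tensor R_{\cat{E}}\to Q\tensor R_{\cat{E}}$ can be analysed via the filtration/good-pushout description of $\tensor R_{\cat{E}}$ from \cite{Scho07} and $M$ being $\Ext$-orthogonal to all $R_{\cat{E}}$-modules means the presentation stays a monomorphism after localising. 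Then $M\tensor R_{\cat{E}}=\cok(P\tensor R_{\cat{E}}\to Q\tensor R_{\cat{E}})$ with the map injective, so it is a finitely generated module of projective dimension $\le 1$ over the hereditary ring $R_{\cat{E}}$; to upgrade to projective I would show the monomorphism $P\tensor R_{\cat{E}}\to Q\tensor R_{\cat{E}}$ splits, which follows because $\Ext_{R_{\cat{E}}}(M\tensor R_{\cat{E}}, P\tensor R_{\cat{E}})$ is a quotient of / computed from $\Ext_R(M, P\tensor R_{\cat{E}})=0$ (as $P\tensor R_{\cat{E}}\in E^\perp$).

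For (3), one direction is immediate: if $M\in\cat{E}$ then $M$ is $R_{\cat{E}}$-trivial so $M\tensor R_{\cat{E}}=0$. Conversely, if $M\tensor R_{\cat{E}}=0$ then $\Hom_R(M,\ )$ vanishes on $E^\perp$, so $M$ lies in $\fac{E}$ (finitely presented torsion modules); but $M\in\relproj{E^{\perp}}$ also has $\Ext_R(M,\ )$ vanishing on $E^\perp$, so $M$ is $R_{\cat{E}}$-trivial, i.e. $M\in\cat{E}$ by Theorem~5.6 of \cite{Scho07}. For (4), write $K$ for the kernel of $M\to M\tensor R_{\cat{E}}$ and $\bar M$ for the image; by Theorem~5.5 of \cite{Scho07}, $K$ is the torsion submodule of $M$, hence $K\tensor R_{\cat{E}}=0=\Tor_1^R(K,R_{\cat{E}})$ — the latter because $K\subset M$ and $\Tor_1^R(M,R_{\cat{E}})=0$ gives an injection $\Tor_1^R(K,R_{\cat{E}})\hookrightarrow\Tor_1^R(M,R_{\cat{E}})$ — so $K$ is $R_{\cat{E}}$-trivial, i.e. $K\in\cat{E}$. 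Finally, since $\cat{E}$ is closed under submodules inside $\relproj{E^{\perp}}$ (indeed $\relproj{E^{\perp}}$ is closed under submodules and $K\in\cat{E}\subseteq\relproj{E^{\perp}}$), and the sequence $\tses{K}{M}{\bar M}$ has $K,M\in\relproj{E^{\perp}}$ with $\relproj{E^{\perp}}$ closed under cokernels of this kind (extension-closedness plus the hereditary ambient gives closure under such quotients — or simply: $\Ext_R(\bar M, N)$ surjects from $\Ext_R(M,N)=0$ using $\Ext_R^2=0$ and $\Hom_R(K,N)$... actually $\Ext_R(\bar M,N)\hookrightarrow\Ext_R(M,N)$ since $\Ext^2=0$), we get $\bar M\in\relproj{E^{\perp}}$.

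**The main obstacle** I expect is step (2), specifically proving that $M\tensor R_{\cat{E}}$ is genuinely \emph{projective} rather than merely of projective dimension $\le 1$: this requires showing the localised presentation $P\tensor R_{\cat{E}}\rightarrow Q\tensor R_{\cat{E}}$ splits, which hinges on a clean vanishing of $\Ext_{R_{\cat{E}}}(M\tensor R_{\cat{E}},-)$ on projectives, and I would want to reduce this to $\Ext_R(M,-)$ vanishing on $E^\perp$ via a change-of-rings argument that itself relies on $\Tor_1^R(M,R_{\cat{E}})=0$ from step (1) — so the two steps are intertwined and the order of establishing them must be handled carefully. The rest is bookkeeping with the torsion theory and the closure properties already recorded for $\relproj{E^{\perp}}$ and $\cat{E}$.
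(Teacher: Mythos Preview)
Your overall strategy is workable, but you have made step (2) far harder than it needs to be. The paper dispatches projectivity in one line via the tensor--Hom adjunction: for any $R_{\cat{E}}$-module $N$ one has $\Hom_{R_{\cat{E}}}(M\tensor R_{\cat{E}},N)\cong\Hom_R(M,N)$, and the right-hand side is exact in $N$ on $E^{\perp}$ by hypothesis; hence $M\tensor R_{\cat{E}}$ is projective. No $\Tor$ vanishing, no splitting of the localised presentation, no change-of-rings spectral argument is needed. Your route through $\Tor_1=0$ and then splitting would eventually succeed, but the justification you give for $\Tor_1^R(M,R_{\cat{E}})=0$ (``forces $\Tor_1$ to be zero'') is not yet an argument, and the whole detour is avoidable.

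For step (3) you match the paper. For step (4) there are two differences worth noting. First, you never check that the torsion submodule $K$ is finitely \emph{presented}, which is required for $K\in\cat{E}\subset\fpmod{R}$; the paper handles this by invoking coherence of $R$ together with the ascending chain condition on bound submodules. Second, for the image $\bar M$ the paper argues that $\Ext_R(M\tensor R_{\cat{E}},-)=\Ext_{R_{\cat{E}}}(M\tensor R_{\cat{E}},-)=0$ (using projectivity just established and theorem 4.7 of \cite{Scho1}), and then any finitely presented submodule of $M\tensor R_{\cat{E}}$ inherits $\Ext$-vanishing because $R$ is hereditary. Your long exact sequence approach also works once cleaned up: from $0\to K\to M\to\bar M\to 0$ and $N\in E^{\perp}$ you get $\Hom_R(K,N)\to\Ext_R(\bar M,N)\to\Ext_R(M,N)$, and both outer terms vanish (the first since $K\in\cat{E}$, the second by hypothesis); your parenthetical ``$\Ext_R(\bar M,N)\hookrightarrow\Ext_R(M,N)$ since $\Ext^2=0$'' is not quite the right exactness statement but the conclusion is correct.
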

\begin{proof}
We note that $\Hom_{R_{\cat{E}}}(M\tensor R_{\cat{E}},\ ) \cong
\Hom_{R}(M,\ ) $ on $E^{\perp}$ which is exact by assumption. So
$M\tensor R_{\cat{E}}$ is a projective module. It is finitely
generated because $M$ is. 

Further, $M\tensor R_{\cat{E}}=0$ if and only if $\Hom(M,\ ) $
vanishes on $E^{\perp}$. However, $\Ext(M,\ ) $ vanishes by
assumption. Theorem 5.6 of \cite{Scho07} now completes the proof that
this forces $M$ to lie in $\cat{E}$.

Since $R$ is coherent, a finitely generated submodule of $M$ is
finitely presented and because $R$ is hereditary, $M$ satisfies the
ascending chain condition on bound submodules. Therefore, the maximal
torsion submodule $T$ of $M$ (with respect to the torsion theory
generated by $\cat{E}$) is finitely presented. Since $\relproj{E^{\perp}}$
is closed under submodules, $T\in \relproj{E^{\perp}}$ and since $T$ is
torsion with respect to the torsion theory generated by $\cat{E}$,
$\Hom(T,\ ) =0$ on $E^{\perp}$ and the previous paragraph completes
the proof that $T$ lies in $\cat{E}$. Finally, we know by theorem 4.7 of
\cite{Scho1} that $\Ext_{R}(\ ,\ ) = \Ext_{R_{\cat{E}}}(\ ,\ )$ for
$R_{\cat{E}}$ modules and so, $\Ext_{R}(M\tensor R_{\cat{E}},\ ) =
\Ext_{R_{\cat{E}}}(M\tensor R_{\cat{E}},\ ) = 0$ and so any finitely
presented submodule and in particular the image of $M$ in $M\tensor
R_{\cat{E}}$ must be in $\relproj{E^{\perp}}$.
\end{proof}

Now we can characterise the category $\relproj{E^{\perp}}$. 

\begin{lemma}
\label{lem:charPEp}
The category $\relproj{E^{\perp}}$ is the smallest full subcategory of
$\fpmod{R}$ containing $\fgproj{R}$ and $\cat{E}$ that is closed under
extensions and submodules. 
\end{lemma}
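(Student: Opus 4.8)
The plan is to prove the inclusion $\relproj{E^{\perp}}\subseteq\cat{C}$ for every full subcategory $\cat{C}$ of $\fpmod{R}$ that contains $\fgproj{R}$ and $\cat{E}$ and is closed under extensions and submodules; since the paragraph preceding the lemma already records that $\relproj{E^{\perp}}$ is itself such a category, this yields the statement. So I would fix $M\in\relproj{E^{\perp}}$ and argue that $M\in\cat{C}$.

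The first move is to reduce to modules that are torsion-free with respect to the torsion theory generated by $\cat{E}$. By Lemma~\ref{lem:PEptoproj} the kernel $T$ of $M\to M\tensor R_{\cat{E}}$ lies in $\cat{E}\subseteq\cat{C}$, and the image $M/T$ again lies in $\relproj{E^{\perp}}$ and is torsion-free; since $\cat{C}$ is closed under extensions, it suffices to treat $M/T$, so I may assume $M$ is torsion-free and hence embeds in $M\tensor R_{\cat{E}}$.

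Now I would exploit projectivity over $R_{\cat{E}}$: by Lemma~\ref{lem:PEptoproj}, $M\tensor R_{\cat{E}}$ is a finitely generated projective $R_{\cat{E}}$-module, hence a direct summand of a free module $R_{\cat{E}}^{\,n}=R^{n}\tensor R_{\cat{E}}$, so $M$ is realised as a finitely generated $R$-submodule of $R^{n}\tensor R_{\cat{E}}$. Under the standing assumption of this section $R$ embeds in $R_{\cat{E}}$, so $R^{n}$ is torsion-free, and Theorem~\ref{th:structfpind} exhibits $R^{n}\tensor R_{\cat{E}}$ as the directed union of $R$-submodules $N_{t}\supseteq R^{n}$ with $N_{t}/R^{n}\in\cat{E}$. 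Being finitely generated, $M$ lies inside one such $N_{t}$; and $N_{t}$ is an extension of $N_{t}/R^{n}\in\cat{E}\subseteq\cat{C}$ by $R^{n}\in\fgproj{R}\subseteq\cat{C}$, hence $N_{t}\in\cat{C}$, and therefore $M\in\cat{C}$ because $\cat{C}$ is closed under submodules. This gives $\relproj{E^{\perp}}\subseteq\cat{C}$ and finishes the proof.

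I do not expect a serious obstacle, given Lemma~\ref{lem:PEptoproj} and Theorem~\ref{th:structfpind}; the point that needs care is that an arbitrary object of $\relproj{E^{\perp}}$ need not embed in its localisation, so one must first peel off the torsion submodule — which, fortunately, is forced to lie in $\cat{E}$ — before one can realise the module inside a free $R_{\cat{E}}$-module and then sandwich it between $R^{n}$ and a finite stage $N_{t}$ of the directed union.
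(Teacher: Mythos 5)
Your proposal is correct and follows essentially the same route as the paper: peel off the kernel of $M\to M\tensor R_{\cat{E}}$ (which lies in $\cat{E}$) using Lemma~\ref{lem:PEptoproj}, then embed the torsion-free quotient in a finitely generated free $R_{\cat{E}}$-module and trap it inside a finite stage $N_{t}$ of the directed union from Theorem~\ref{th:structfpind}, concluding by closure under extensions and submodules. No gaps; your closing remark about why the torsion submodule must be removed first is exactly the point the paper's proof also turns on.
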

\begin{proof}
  It is clear that $\relproj{E^{\perp}}$ is closed under extensions and
  submodules and contains $\fgproj{R}$ and $\cat{E}$. Let us for the
  moment name the smallest category with these properties $\cat{B} $.  
  Let $M\in \relproj{E^{\perp}}$.  The kernel $K$ of the homomorphism from
  $M$ to $M\tensor R_{\cat{E}}$ lies in $\cat{E}$ so it is enough to
  show that $M/K\in \cat{B}$. However, $M/K$ is an $R$  submodule of a
  finitely generated projective module over $R_{\cat{E}}$ and hence of
  some finitely generated free module over $R_{\cat{E}}$. However, by
  theorem \ref{th:structfpind}, $R_{\cat{E}}^{n}$ is a direct limit of $R$
  modules $N_{i}$ for which we have a short exact sequence
  $\tses{R^{n}}{N_{i}}{E_{i}}$ where $E_{i}\in \cat{E}$. So
  $M/K\subset N_{i}$ for some $i$ and $N_{i}\in \cat{B}$ which implies
  $M/K \in \cat{B}$.  
\end{proof}

Next we point out the connection between $\fac{E}$ and
$\relproj{E^{\perp}}$. 

\begin{lemma}
\label{lem:Tindisproj}
Let $T\in \fac{E}$. Then $\Tor_{1}^{R}(T, R_{\cat{E}}) $ is a finitely
generated projective module over $R_{\cat{E}}$. 

In fact, if we have a short exact sequence $\tses{K}{E}{T}$ where
$E\in \cat{E}$ then $\Tor_{1}^{R}(T, R_{\cat{E}})\cong K \tensor R_{\cat{E}}$. 
\end{lemma}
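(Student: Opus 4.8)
The plan is to exploit the short exact sequence $\tses{K}{E}{T}$ with $E \in \cat{E}$ directly, applying the long exact sequence for $\Tor^R_*(\ , R_{\cat{E}})$. First I would note that since $E \in \cat{E}$ we have $E \tensor R_{\cat{E}} = 0$ and, because $E$ has homological dimension at most $1$ and is $R_{\cat{E}}$-trivial, also $\Tor_1^R(E, R_{\cat{E}}) = 0$ (this is exactly the defining property of $\cat{E}$ as recorded via theorem 5.2 and theorem 5.6 of \cite{Scho07}, and $R$ being hereditary means higher $\Tor$ vanish automatically). Feeding $\tses{K}{E}{T}$ into the long exact sequence then gives
\[
0 \to \Tor_1^R(E, R_{\cat{E}}) \to \Tor_1^R(T, R_{\cat{E}}) \to K \tensor R_{\cat{E}} \to E \tensor R_{\cat{E}} \to T \tensor R_{\cat{E}} \to 0,
\]
and since the two outer $E$-terms vanish, the connecting map yields an isomorphism $\Tor_1^R(T, R_{\cat{E}}) \cong K \tensor R_{\cat{E}}$. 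This proves the second assertion.

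For the first assertion, it then suffices to show $K \tensor R_{\cat{E}}$ is a finitely generated projective $R_{\cat{E}}$-module. The key observation is that $K$, being a submodule of $E \in \cat{E} \subset \relproj{E^{\perp}}$ and $\relproj{E^{\perp}}$ being closed under submodules (as noted in the paragraph preceding lemma \ref{lem:PEptoproj}), lies in $\relproj{E^{\perp}}$; moreover $K$ is finitely generated because $R$ is coherent and $E$ is finitely presented (so $K$ is finitely presented too, being a finitely generated submodule of a finitely presented module over a hereditary ring — indeed $\relproj{E^{\perp}} \subseteq \fpmod{R}$ by definition). Now lemma \ref{lem:PEptoproj} applies directly to $K$: it says $K \tensor R_{\cat{E}}$ is a finitely generated projective $R_{\cat{E}}$-module. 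Combining with the isomorphism from the first paragraph completes the proof.

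One technical point I would want to check carefully is that $K$ really is finitely generated. Given a module $T$ that is a factor of some $F \in \cat{E}$, one can choose the surjection $E \twoheadrightarrow T$ with $E$ finitely generated (e.g. $E = F$ itself, or a suitable finitely generated submodule/summand), and then $K = \ker(E \to T)$ is finitely generated precisely because $R$ is (left and right) hereditary and hence Noetherian on the relevant submodules — more concretely, $E$ satisfies the ascending chain condition on its submodules that arise here since $E$ has finite length in $\cat{E}$ by theorem \ref{th:wpisfl}, so every submodule of $E$ is finitely generated. This is the only place where finiteness needs care; everything else is a formal consequence of the hereditary hypothesis and the already-established lemma \ref{lem:PEptoproj}. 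I do not expect any genuine obstacle here — the main content has already been packaged into lemma \ref{lem:PEptoproj} and the $\Tor$-vanishing properties of $\cat{E}$.
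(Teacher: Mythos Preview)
Your argument is correct and matches the paper's approach exactly: the paper's proof is simply a compressed version of yours, noting that $\Tor_1^R(E,R_{\cat{E}})=0=E\tensor R_{\cat{E}}$ gives the isomorphism, and that the existence of a surjection from some $E\in\cat{E}$ then yields the first claim (with lemma~\ref{lem:PEptoproj} applied to $K$ left implicit). One small remark: your final paragraph on finite generation of $K$ is over-worried and the reason you give there (finite length of $E$ in $\cat{E}$) is not the right one---finite length in $\cat{E}$ only controls submodules lying in $\cat{E}$; the correct and sufficient reason is the one you already stated earlier, namely that $T$ is finitely presented and $E$ finitely generated, so the kernel of any surjection $E\to T$ is finitely generated (coherence, or even just the definition of finitely presented).
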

 \begin{proof}
We begin with the second statement which follows at once because
$\Tor_{1}^{R}(E, R_{\cat{E}}) =0= E \tensor R_{\cat{E}}$.

Now the first statement follows because we can find $E\in \cat{E}$ and
a surjection from $E$ onto $T$. 
\end{proof}

We come to our first main theorem.

\begin{theorem}
\label{th:charfgproj}
Let $R$ be a hereditary ring and let $\cat{E}$ be a well-placed
subcategory of $\fpmod{R}$. Then every finitely generated projective
module over $ R_{\cat{E}}$ is isomorphic to a module of the form
$M\tensor R_{\cat{E}}$ for some $M\in \relproj{E^{\perp}}$. 

Therefore, $P_{\directsum}(R_{\cat{E}})\cong
\abmon{\relproj{E^{\perp}}}/\{[E]=0, E\in \cat{E}\}$.  
\end{theorem}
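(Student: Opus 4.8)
The plan is to establish the two assertions in turn, using the categories and functors set up above together with Lemmas~\ref{lem:PEptoproj} and~\ref{lem:charPEp} and Theorems~\ref{th:structfpind} and~\ref{th:chariso}. Throughout I may assume, after factoring out the trace ideal of the finitely generated projective modules lying in $\cat{E}$ exactly as in Section~\ref{sec:bound}, that $\cat{E}$ contains no nonzero finitely generated projective module; since $R$ is hereditary this forces every object of $\cat{E}$ to be bound, so that Theorem~\ref{th:chariso} applies.

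For the first assertion I would begin with a finitely generated projective $R_{\cat{E}}$-module $Q$. Since every finitely presented module over $R_{\cat{E}}$ is induced, write $Q\cong M_{0}\tensor R_{\cat{E}}$ with $M_{0}$ finitely presented over $R$, and replace $M_{0}$ by its quotient $M$ by the torsion submodule with respect to the torsion theory generated by $\cat{E}$; this submodule is finitely generated by Theorem~\ref{th:structfpind} and is killed by $\tensor R_{\cat{E}}$, so $M\tensor R_{\cat{E}}\cong Q$ still and $M$ is torsion-free, hence embeds in $Q$. Now $Q$ is a direct summand of some $R_{\cat{E}}^{n}$, so $M\subseteq R_{\cat{E}}^{n}$; by Theorem~\ref{th:structfpind}, $R_{\cat{E}}^{n}$ is the directed union of $R$-modules $N_{i}\supseteq R^{n}$ with $N_{i}/R^{n}\in\cat{E}$, each of which lies in $\relproj{E^{\perp}}$ by Lemma~\ref{lem:charPEp}, being an extension of an object of $\cat{E}$ by a finitely generated projective module. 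As $M$ is finitely generated it lies inside one such $N_{i}$, and since $\relproj{E^{\perp}}$ is closed under submodules (Lemma~\ref{lem:charPEp}) we conclude $M\in\relproj{E^{\perp}}$.

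For the monoid statement, recall that $\tensor R_{\cat{E}}$ restricts to an exact functor from $\relproj{E^{\perp}}$ to $\fgproj{R_{\cat{E}}}$, using the vanishing of $\Tor_{1}^{R}(-,R_{\cat{E}})$ on $\relproj{E^{\perp}}$ and Lemma~\ref{lem:PEptoproj}; by the first assertion this functor is surjective on objects. It therefore induces a surjective homomorphism of abelian monoids $\abmon{\relproj{E^{\perp}}}\to\abmon{\fgproj{R_{\cat{E}}}}=P_{\directsum}(R_{\cat{E}})$, and since $E\tensor R_{\cat{E}}=0$ for every $E\in\cat{E}$ (Lemma~\ref{lem:PEptoproj}) it factors through a surjection $G$ from $\abmon{\relproj{E^{\perp}}}/\{[E]=0:E\in\cat{E}\}$ onto $P_{\directsum}(R_{\cat{E}})$. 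It remains to prove that $G$ is injective. Since $\relproj{E^{\perp}}$ is closed under finite direct sums, every element of the quotient monoid has the form $[M]$ for a single $M\in\relproj{E^{\perp}}$, so it suffices to show that $M\tensor R_{\cat{E}}\cong N\tensor R_{\cat{E}}$ for $M,N\in\relproj{E^{\perp}}$ forces $[M]=[N]$ modulo $\{[E]=0\}$; note that $G([M])=G([N])$ does mean precisely this isomorphism, as the monoid $P_{\directsum}(R_{\cat{E}})$ consists of genuine isomorphism classes. Replacing $M$ and $N$ by their torsion-free quotients alters each class only by a term $[E]$ with $E\in\cat{E}$ (Lemma~\ref{lem:PEptoproj}), so I may assume $M$ and $N$ are torsion-free; then Theorem~\ref{th:chariso} yields a module $L$ together with short exact sequences $\tses{M}{L}{E}$ and $\tses{N}{L}{F}$ with $E,F\in\cat{E}$. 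As $\relproj{E^{\perp}}$ is closed under extensions, $L\in\relproj{E^{\perp}}$, and in $\abmon{\relproj{E^{\perp}}}$ we have $[L]=[M]+[E]$ and $[L]=[N]+[F]$; reducing modulo $\{[E]=0\}$ gives $[M]=[L]=[N]$.

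I expect the main obstacle to be the injectivity of $G$: it is precisely here that one must invoke the delicate characterisation (Theorem~\ref{th:chariso}) of when two finitely presented $R$-modules induce isomorphic modules over $R_{\cat{E}}$, combined with the reduction to torsion-free modules and the remark that $P_{\directsum}$ detects honest isomorphism. By contrast, both the surjectivity of $G$ and the first assertion are fairly routine once the realisation of $R_{\cat{E}}^{n}$ as a directed union of modules over $R$ from Theorem~\ref{th:structfpind} and the description of $\relproj{E^{\perp}}$ from Lemma~\ref{lem:charPEp} are available.
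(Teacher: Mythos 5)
Your proof is correct and follows essentially the same route as the paper: the first assertion via an induced torsion-free $M$ embedded in $R_{\cat{E}}^{n}$ together with the directed-union description from Theorem~\ref{th:structfpind} and the closure properties of $\relproj{E^{\perp}}$, and the monoid statement via Theorem~\ref{th:chariso} after stripping off the torsion submodules, which lie in $\cat{E}$. Your treatment of the quotient monoid and the injectivity of $G$ is in fact slightly more explicit than the paper's.
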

\begin{proof}
Every finitely presented module over $R_{\cat{E}}$ is induced from
some finitely presented module over $R$ which applies to any finitely
generated projective module over $R_{\cat{E}}$. Moreover, we can
assume that it is of the form $M\tensor R_{\cat{E}}$  where $M$ embeds
in $M\tensor R_{\cat{E}}$. But as we saw in the argument of lemma
\ref{lem:PEptoproj}, this implies that $M\in \relproj{E^{\perp}}$.  

Now suppose that $M\tensor R_{\cat{E}} \cong N \tensor
R_{\cat{E}}$. Let $K_{1}$  and $K_{2}$ be the kernels of the
homomorphisms from $M$ to $M\tensor R_{\cat{E}}$ and from $N$ to $N
\tensor R_{\cat{E}}$ respectively. Then $K_{1},K_{2} \in \cat{E}$ and 
$M\tensor R_{\cat{E}}\cong M/K_{1}\tensor R_{\cat{E}}$ whilst
$N\tensor R_{\cat{E}}\cong N/K_{2}\tensor R_{\cat{E}}$. So
$[M]=[M/K_{1}] + [K_{1}]$ and $[N]=[N/K_{2}]+ [K_{2}]$.

We saw in theorem \ref{th:chariso} that $M/K_{1}\tensor R_{\cat{E}} \cong
N/K_{2} \tensor R_{\cat{E}}$ if and only if there exist two short
exact sequences $\tses{M/K_{1}}{L}{E}$ and $\tses{N/K_{2}}{L}{E'}$
where $E, E'\in \cat{E}$. So $[M/K_{1}]+ [E] = [N/K_{2}] + [E']$. 

It follows that $[M] + [E] + [K_{2}]=  [N] + [E'] + [K_{1}]$ in
$\abmon{\relproj{E^{\perp}}}$ which shows that the relations are as stated.  
\end{proof}

We now link in what happens to the category $\fac{E}$.

\begin{lemma}
\label{lem:remtopE}
Let $M \in \fac{E}$. Then $M$ has a unique submodule $T$ such that
$M/T \in \cat{E}$ and $\Hom(T,E) =0$ for every $E\in \cat{E}$. $T$ is
itself in $\fac{E}$. Further, $\Tor_{1}^{R}(T, R_{\cat{E}}) \cong
\Tor_{1}^{R}(M, R_{\cat{E}}) $.
\end{lemma}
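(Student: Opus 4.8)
The plan is to build $T$ as a minimal element, with respect to inclusion, of the set $\mathcal{C}$ of all submodules $N\subseteq M$ with $M/N\in\cat{E}$, and then to squeeze the $\Hom$-vanishing out of minimality. First I would check that $\mathcal{C}$ is nonempty and behaves well: it contains $M$ itself since $0\in\cat{E}$, and every $N\in\mathcal{C}$ is finitely generated (hence, over the coherent ring $R$, finitely presented) because $M$ is finitely generated and $M/N\in\cat{E}\subseteq\fpmod{R}$; tensoring $\tses{N}{M}{M/N}$ with $R_{\cat{E}}$ and using that $M/N$ is $R_{\cat{E}}$-trivial while $M\in\fac{E}$ gives $N\tensor R_{\cat{E}}=0$, so $N\in\fac{E}$, and in particular $N$ is bound (as $R$ embeds in $R_{\cat{E}}$). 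The same applies to $M$, so $M$ is bound; since $R$ is hereditary, the bound submodules of $M$ satisfy the descending chain condition, and so $\mathcal{C}$ has a minimal element $T$.

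Next I would show $\Hom(T,E)=0$ for every $E\in\cat{E}$. By left exactness of $\Hom(T,-)$ and induction on the length of $E$ in the finite length category $\cat{E}$, it suffices to treat $E=S$ simple in $\cat{E}$. Given $0\neq g\colon T\to S$, its image is a quotient of $T\in\fac{E}$, hence a torsion module, so $\im(g)\tensor R_{\cat{E}}=0$; but $\im(g)\neq 0$, so by Lemma~\ref{lem:subnotbound} $\im(g)$ cannot be a strict submodule of $S$, i.e.\ $g$ is surjective. Then $T':=\ker(g)\subsetneq T$, and $M/T'$ is an extension of $M/T\in\cat{E}$ by $T/T'\cong S\in\cat{E}$, hence $M/T'\in\cat{E}$ as $\cat{E}$ is closed under extensions. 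Thus $T'\in\mathcal{C}$ with $T'\subsetneq T$, contradicting minimality, so no such $g$ exists.

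For uniqueness, suppose $T_{1},T_{2}$ both have the stated properties, and put $W:=(T_{1}+T_{2})/T_{1}$, the image of $T_{2}$ in $M/T_{1}$. I claim $W\in\cat{E}$. Since $W$ is a quotient of $T_{2}\in\fac{E}$, $W\tensor R_{\cat{E}}=0$; moreover $M/(T_{1}+T_{2})$ is finitely presented (as $T_{1},T_{2}$ are finitely generated), and tensoring $\tses{W}{M/T_{1}}{M/(T_{1}+T_{2})}$ with $R_{\cat{E}}$, using $M/T_{1}\in\cat{E}$ and $\Tor_{2}^{R}(\ ,\ )=0$, shows $M/(T_{1}+T_{2})\tensor R_{\cat{E}}=0=\Tor_{1}^{R}(M/(T_{1}+T_{2}),R_{\cat{E}})$, so $M/(T_{1}+T_{2})$ is $R_{\cat{E}}$-trivial, i.e.\ lies in $\cat{E}$. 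Therefore $W$ is the kernel of the morphism $M/T_{1}\to M/(T_{1}+T_{2})$ between objects of $\cat{E}$, and since $\cat{E}$ is an exact abelian subcategory of $\fpmod{R}$, $W\in\cat{E}$. As $W$ is a quotient of $T_{2}$ and $\Hom(T_{2},E)=0$ for all $E\in\cat{E}$, we conclude $W=0$, i.e.\ $T_{2}\subseteq T_{1}$; by symmetry $T_{1}=T_{2}$.

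Finally, $T\in\fac{E}$ and $\Tor_{1}^{R}(T,R_{\cat{E}})\cong\Tor_{1}^{R}(M,R_{\cat{E}})$ follow by tensoring $\tses{T}{M}{M/T}$ with $R_{\cat{E}}$: since $M/T\in\cat{E}$ we have $M/T\tensor R_{\cat{E}}=0=\Tor_{1}^{R}(M/T,R_{\cat{E}})$, and $\Tor_{2}^{R}(M/T,R_{\cat{E}})=0$ as $R$ is hereditary, which forces $T\tensor R_{\cat{E}}=0$ (so $T\in\fac{E}$, being finitely presented as in the first paragraph) and the displayed isomorphism. The main obstacle I expect is the uniqueness step, and more broadly a recurring theme throughout: $\cat{E}$ is \emph{not} closed under arbitrary $R$-submodules or quotients (Lemma~\ref{lem:subnotbound}), only under sub- and quotient objects formed inside $\cat{E}$. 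In the existence argument this is exactly what forces nonzero maps into $\cat{E}$-simples to be surjective, and in the uniqueness argument recognising the overlap module $W$ as lying in $\cat{E}$ is possible only via the detour through the $\Tor$ long exact sequence and the description of $\cat{E}$ as the category of $R_{\cat{E}}$-trivial modules.
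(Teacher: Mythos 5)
Your proof is correct and follows essentially the same route as the paper: both arguments rest on the descending chain condition for bound submodules of a bound module over a hereditary ring, with your minimal-element formulation replacing the paper's explicit iteration, and your appeal to Lemma~\ref{lem:subnotbound} replacing the paper's observation (via a surjection from some $F\in\cat{E}$ onto $M$ and closure of $\cat{E}$ under images) that the image of any map from $M$ to a module of $\cat{E}$ lies in $\cat{E}$. Only your uniqueness step is more elaborate than necessary: since $M/T_{1}\in\cat{E}$ and $\Hom(T_{2},E)=0$ for every $E\in\cat{E}$, the composite $T_{2}\hookrightarrow M\to M/T_{1}$ is already zero, giving $T_{2}\subseteq T_{1}$ directly without the detour through $W$ and the $\Tor$ long exact sequence.
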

\begin{proof}
Suppose that $\Hom(M,E) \neq 0$. Then choose some $F \in \cat{E}$ that
maps onto $M$ and some nonzero $\map{f}{M}{E}$ where $E\in
\cat{E}$. The image of $f$ in $E$ is also the image of the composition
of the surjection from $F$ to $M$ with $f$ and since $\cat{E}$ is
closed under images, the image of $f$ lies in $\cat{E}$ and we obtain
a short exact sequence $\tses{M_{1}}{M}{I_{1}}$ where $0\neq I_{1} \in
\cat{E}$. Applying $\tensor R_{\cat{E}} $ to this
short exact sequence, we see that $M_{1} \tensor R_{\cat{E}} =0$ and
so $M_{1} \in \fac{E}$ and also $\Tor_{1}^{R}(M_{1}, R_{\cat{E}}) \cong
\Tor_{1}^{R}(M, R_{\cat{E}}) $.  

Iterating this procedure either gives a descending chain of submodules of
$M$, $M\supset M_{1} \supset \dots \supset M_{n} \supset$ or else some
$\Hom(M_{i},\ ) $ vanishes on $\cat{E}$. Since $M$  satisfies the
descending chain condition on bound submodules we cannot have an
infinite descending chain and therefore we obtain some $M_{i}$ where
$\Hom(M_{i},\ ) $ vanishes on $\cat{E}$. By construction, $M/M_{i} \in
\cat{E}$. So we  take $T= M_{i}$. However, the uniqueness
of $T$ is clear when it exists since $T$ must lie in the kernel of any
homomorphism from $M$ to a module in $\cat{E}$. 

The isomorphism of $\Tor_{1}^{R}(T, R_{\cat{E}}) $  with
$\Tor_{1}^{R}(M, R_{\cat{E}})$ follows from the composition of the
isomorphisms of $\Tor_{1}^{R}(M_{j}, R_{\cat{E}})$ with
$\Tor_{1}^{R}(M_{j+1}, R_{\cat{E}})$ for each $j$. 
\end{proof}

The point of this lemma is to remove surplus copies of modules in
$\cat{E}$ from the top of modules in $\fac{E}$ as a dual process to
the removal of modules in $\cat{E}$ that occur as submodules in
modules in $\relproj{E^{\perp}}$.

\begin{lemma}
\label{lem:chartoriso}
Let $M,N$ be modules in $ \fac{E}$ such that $\Hom(M,\ ) $ and
$\Hom(N,\ )$ both vanish on $\cat{E}$. Then $\Tor_{1}^{R}(M,
R_{\cat{E}}) \cong \Tor_{1}^{R}(N, R_{\cat{E}})$ if and only if there
exist two short exact sequences $\tses{K}{E_{1}}{M}$ and
$\tses{K}{E_{2}}{N}$ where $E_{1}, E_{2}\in \cat{E}$ if and only if there
exist two short exact sequences $\tses{E}{L}{M}$ and $\tses{E'}{L}{N}$
where $ E,E'\in \cat{E}$.
\end{lemma}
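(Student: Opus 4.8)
The plan is to establish the cycle of implications $(1)\Rightarrow(2)\Rightarrow(3)\Rightarrow(1)$, where $(1)$ denotes $\Tor_{1}^{R}(M,R_{\cat{E}})\cong\Tor_{1}^{R}(N,R_{\cat{E}})$, $(2)$ the existence of $\tses{K}{E_{1}}{M}$ and $\tses{K}{E_{2}}{N}$ with $E_{1},E_{2}\in\cat{E}$, and $(3)$ the existence of $\tses{E}{L}{M}$ and $\tses{E'}{L}{N}$ with $E,E'\in\cat{E}$. Two of the three arrows are short. For $(3)\Rightarrow(1)$, apply $\tensor R_{\cat{E}}$ to $\tses{E}{L}{M}$: since $R$ is hereditary $\Tor_{2}$ vanishes, and since $E\in\cat{E}$ we have $\Tor_{1}^{R}(E,R_{\cat{E}})=0=E\tensor R_{\cat{E}}$ (as used in Lemma \ref{lem:Tindisproj}), so the long exact sequence forces $\Tor_{1}^{R}(L,R_{\cat{E}})\cong\Tor_{1}^{R}(M,R_{\cat{E}})$; the sequence $\tses{E'}{L}{N}$ likewise gives $\Tor_{1}^{R}(L,R_{\cat{E}})\cong\Tor_{1}^{R}(N,R_{\cat{E}})$, whence $(1)$. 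For $(2)\Rightarrow(3)$, a common submodule $K$ sitting inside both $E_{1}$ and $E_{2}$ lets us form the pushout $L=E_{1}\oplus_{K}E_{2}$; pushouts of monomorphisms are monomorphisms, so $E_{1}\hookrightarrow L$ and $E_{2}\hookrightarrow L$ are injective with cokernels $N$ and $M$, and since $E_{1},E_{2}\in\cat{E}$ this is exactly $(3)$.

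The substantial direction is $(1)\Rightarrow(2)$. Since $M,N\in\fac{E}$, choose surjections $E_{1}^{0}\twoheadrightarrow M$ and $E_{2}^{0}\twoheadrightarrow N$ with $E_{1}^{0},E_{2}^{0}\in\cat{E}$, and let $K_{1},K_{2}$ be the kernels; these are finitely presented as $R$ is coherent. By Lemma \ref{lem:Tindisproj} (using $M\tensor R_{\cat{E}}=0=N\tensor R_{\cat{E}}$) we get $K_{1}\tensor R_{\cat{E}}\cong\Tor_{1}^{R}(M,R_{\cat{E}})\cong\Tor_{1}^{R}(N,R_{\cat{E}})\cong K_{2}\tensor R_{\cat{E}}$. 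Each $K_{i}$ is a submodule of an object of $\cat{E}$, hence lies in $\relproj{E^{\perp}}$, so Lemma \ref{lem:PEptoproj} gives that the kernel $T_{i}$ of $K_{i}\to K_{i}\tensor R_{\cat{E}}$ lies in $\cat{E}$ and that $K_{i}':=K_{i}/T_{i}$ is finitely presented and torsion-free with $K_{i}'\tensor R_{\cat{E}}\cong K_{i}\tensor R_{\cat{E}}$. Because $T_{i}\subseteq K_{i}\subseteq E_{i}^{0}$ and $\cat{E}$ is closed under quotients, $E_{i}:=E_{i}^{0}/T_{i}$ still lies in $\cat{E}$, and we obtain short exact sequences $\tses{K_{1}'}{E_{1}}{M}$ and $\tses{K_{2}'}{E_{2}}{N}$ with $E_{1},E_{2}\in\cat{E}$ and $K_{1}',K_{2}'$ torsion-free.

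Now Theorem \ref{th:chariso} applies to the torsion-free finitely presented modules $K_{1}',K_{2}'$: from $K_{1}'\tensor R_{\cat{E}}\cong K_{2}'\tensor R_{\cat{E}}$ we obtain a module $L'$ with short exact sequences $\tses{K_{1}'}{L'}{G_{1}}$ and $\tses{K_{2}'}{L'}{G_{2}}$ where $G_{1},G_{2}\in\cat{E}$. Push out $\tses{K_{1}'}{E_{1}}{M}$ along $K_{1}'\hookrightarrow L'$ to get $\tses{L'}{\widetilde{E}_{1}}{M}$; the complementary description of the same pushout reads $\tses{E_{1}}{\widetilde{E}_{1}}{G_{1}}$, so $\widetilde{E}_{1}\in\cat{E}$ since $\cat{E}$ is closed under extensions. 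Treating $\tses{K_{2}'}{E_{2}}{N}$ the same way yields $\tses{L'}{\widetilde{E}_{2}}{N}$ with $\widetilde{E}_{2}\in\cat{E}$. Taking $K=L'$ delivers the two sequences demanded by $(2)$.

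The main obstacle is the reduction in $(1)\Rightarrow(2)$ that replaces the kernels $K_{i}$ by torsion-free modules so that Theorem \ref{th:chariso} can be invoked; this succeeds precisely because the torsion submodule $T_{i}$ we factor out lies inside $K_{i}$, so passing from $E_{i}^{0}$ to $E_{i}^{0}/T_{i}$ preserves the cokernel $M$ (resp.\ $N$) and keeps us inside $\cat{E}$. Once that is in place, identifying each pushout as an extension of an object of $\cat{E}$ by an object of $\cat{E}$ is routine. (The hypotheses $\Hom(M,\ )=0=\Hom(N,\ )$ on $\cat{E}$ are not needed for this chain of implications, though they hold in the application via Lemma \ref{lem:remtopE}.)
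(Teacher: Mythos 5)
Your proof is correct and follows essentially the same route as the paper: the same cycle of implications, with Theorem \ref{th:chariso} applied to torsion-free kernels of presentations of $M$ and $N$ by objects of $\cat{E}$, followed by pushouts to produce the common submodule $K$ and then the common extension $L$. The only real difference is that you explicitly justify, via Lemma \ref{lem:PEptoproj}, that presentations with torsion-free kernels exist by factoring out the torsion submodule $T_{i}\in\cat{E}$, a point the paper's proof simply asserts when it ``chooses'' such sequences.
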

\begin{proof}
  Assume that $\Tor_{1}^{R}(M, R_{\cat{E}}) \cong \Tor_{1}^{R}(N,
  R_{\cat{E}})$. We choose short exact sequences $\tses{A}{F}{M}$ and
  $\tses{B}{F'}{N} $ where $F,F' \in \cat{E}$ where $A,B$ are
  torsion-free with respect to the torsion theory generated by
  $\cat{E}$. We know that $A \tensor R_{\cat{E}} \cong \Tor_{1}^{R}(M,
  R_{\cat{E}}) \cong \Tor_{1}^{R}(N, R_{\cat{E}}) \cong B \tensor
  R_{\cat{E}}$ and so by \ref{th:chariso}, there exist two short exact
  sequences $\tses{A}{K}{G_{1}}$ and $\tses{B}{K}{G_{2}}$ where $G_{i}
  \in \cat{E}$. We form the pushout of the short exact sequence
  $\tses{A}{F}{M}$  along the map from $A$ to $K$ and the pushout of
  the short exact sequence $\tses{B}{F'}{N}$ along the map from $B$ to
  $K$ to obtain two short exact sequences $\tses{K}{E_{1}}{M}$ and
  $\tses{K}{E_{2}}{N}$ where $E_{i} \in \cat{E}$ because $E_{1}$ is an
  extension of $G_{1}$ on $F$ and $E_{2}$ is an extension of $G_{2}$
  on $F'$. Thus the first condition implies the second.

  Now assume that there exist two short exact sequences
  $\tses{K}{E_{1}}{M}$ and $\tses{K}{E_{2}}{N}$ where $E_{1}, E_{2}\in
  \cat{E}$. Forming the common pushout of these two short exact
  sequences gives a module $L$ and two short exact sequences
  $\tses{E_{2}}{L}{M}$ and $\tses{E_{1}}{L}{N}$ which completes the
  proof that the second condition implies the third.

  Finally, if we assume that there exist two short exact sequences
  $\tses{E}{L}{M}$ and $\tses{E'}{L}{N}$ where $ E,E'\in \cat{E}$ then
  applying $\tensor R_{\cat{E}}$ shows that $\Tor_{1}^{R}(M,
  R_{\cat{E}}) \cong \Tor_{1}^{R}(L, R_{\cat{E}}) \cong
  \Tor_{1}^{R}(N, R_{\cat{E}})$ and demonstrates that the third
  condition implies the first.
\end{proof}

We are in a position to prove the results we want about the category
$\fac{E}$.  

\begin{theorem}
\label{th:relsonA+fac}
Let $R$ be a hereditary ring and let $\cat{E}$ be a well-placed
subcategory of $\fpmod{R}$. Then $\Tor_{1}^{R}(\ , R_{\cat{E}}) $
gives an exact functor from $\fac{E}$ to $\fgproj{R_{\cat{E}}}$. Its
image is the same as the image of the full subcategory of finitely
presented submodules of modules in $\cat{E}$. 

The relations imposed on $\abmon{\fac{E}}$ by the induced map to
$P_{\directsum}(R_{\cat{E}})$ are precisely $\{[E]=0; E\in \cat{E}\}$.
\end{theorem}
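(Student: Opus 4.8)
The plan is to treat the three assertions in turn, leaning heavily on the lemmas just proved. First, exactness of $\Tor_{1}^{R}(\ ,R_{\cat{E}})$ on $\fac{E}$: every $M\in\fac{E}$ satisfies $M\tensor R_{\cat{E}}=0$, so the long exact sequence for $\Tor$ associated to a short exact sequence $\tses{A}{B}{C}$ in $\fac{E}$ collapses to $0\to\Tor_{1}(A)\to\Tor_{1}(B)\to\Tor_{1}(C)\to 0$ (using $R$ hereditary so that $\Tor_{2}=0$). That these $\Tor_{1}$ terms are finitely generated projective over $R_{\cat{E}}$ is exactly Lemma \ref{lem:Tindisproj}. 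Hence $\Tor_{1}^{R}(\ ,R_{\cat{E}})$ is an exact functor $\fac{E}\to\fgproj{R_{\cat{E}}}$ and induces a monoid homomorphism $\abmon{\fac{E}}\to P_{\directsum}(R_{\cat{E}})$.

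For the image statement: every finitely presented submodule $S$ of a module $E\in\cat{E}$ gives a short exact sequence $\tses{S}{E}{E/S}$ with $E/S\in\fac{E}$, and since $\cat{E}\subset\relproj{E^{\perp}}$ we have $\Tor_{1}^{R}(E,R_{\cat{E}})=0$, so $\Tor_{1}^{R}(E/S,R_{\cat{E}})\cong S\tensor R_{\cat{E}}$; thus the image of $\fac{E}$ contains the image of the submodule category. Conversely, for $M\in\fac{E}$ choose (as we always can) $E\in\cat{E}$ surjecting onto $M$ with kernel $K$; then $K$ is a finitely presented submodule of $E$ and Lemma \ref{lem:Tindisproj} gives $\Tor_{1}^{R}(M,R_{\cat{E}})\cong K\tensor R_{\cat{E}}$, which is in the image of the submodule category. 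So the two images coincide.

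The substantive part is the relations. Clearly $[E]\mapsto 0$ for every $E\in\cat{E}$, so the induced map factors through $\abmon{\fac{E}}/\{[E]=0\}$, and I must show this quotient map is injective. The idea is to combine Lemma \ref{lem:remtopE} with Lemma \ref{lem:chartoriso}. Given $M,N\in\fac{E}$ with $\Tor_{1}^{R}(M,R_{\cat{E}})\cong\Tor_{1}^{R}(N,R_{\cat{E}})$, use Lemma \ref{lem:remtopE} to pass to their canonical submodules $T_{M}\subset M$, $T_{N}\subset N$ with $M/T_{M},N/T_{N}\in\cat{E}$, on which $\Hom(\ ,\ )$ vanishes on $\cat{E}$ and for which $\Tor_{1}^{R}(T_{M},R_{\cat{E}})\cong\Tor_{1}^{R}(M,R_{\cat{E}})$; so $[M]=[T_{M}]+[M/T_{M}]$ collapses to $[T_{M}]$ in the quotient, and likewise for $N$. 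Now apply the equivalence of the three conditions in Lemma \ref{lem:chartoriso} to $T_{M}$ and $T_{N}$: we obtain short exact sequences $\tses{E}{L}{T_{M}}$ and $\tses{E'}{L}{T_{N}}$ with $E,E'\in\cat{E}$, whence $[T_{M}]=[L]-[E]=[L]-[E']=[T_{N}]$ in $\abmon{\fac{E}}/\{[E]=0\}$. Therefore $[M]=[N]$ in the quotient, so the map $\abmon{\fac{E}}/\{[E]=0\}\to P_{\directsum}(R_{\cat{E}})$ is injective, and since it is clearly surjective (the image-statement shows every projective $S\tensor R_{\cat{E}}$ with $S$ a finitely presented submodule of some $E\in\cat{E}$ is hit, and by the earlier remarks together with Lemma \ref{lem:Tindisproj} and the $K_0$-type decomposition these generate $P_{\directsum}(R_{\cat{E}})$) it is an isomorphism. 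The main obstacle I anticipate is the bookkeeping in this last step — making sure one only ever needs \emph{equalities} of classes rather than isomorphisms of modules, so that Lemma \ref{lem:chartoriso}'s chain of equivalences is genuinely enough; the surjectivity onto all of $P_{\directsum}(R_{\cat{E}})$ (as opposed to just the part coming from submodules of $\cat{E}$) should be cross-checked against Theorem \ref{th:charfgproj} to be safe.
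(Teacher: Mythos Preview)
Your argument for the theorem as stated is correct and follows the paper's approach exactly: exactness of $\Tor_{1}^{R}(\ ,R_{\cat{E}})$ on $\fac{E}$ from the long exact sequence, the image statement via Lemma \ref{lem:Tindisproj}, and the relations by combining Lemma \ref{lem:remtopE} with Lemma \ref{lem:chartoriso}. Two small points: writing $[T_{M}]=[L]-[E]$ is illegitimate in an abelian monoid, but what you need is only $[L]=[E]+[T_{M}]$ and $[L]=[E']+[T_{N}]$, whence $[T_{M}]=[L]=[T_{N}]$ in the quotient, and then $[M]+[E]+[N/T_{N}]=[N]+[E']+[M/T_{M}]$ already in $\abmon{\fac{E}}$, giving $[M]=[N]$ in the quotient.

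Your surjectivity claim, however, is wrong, and the theorem does not assert it. The image of $\abmon{\fac{E}}$ in $P_{\directsum}(R_{\cat{E}})$ consists precisely of the classes $[K\tensor R_{\cat{E}}]$ for $K$ a finitely presented submodule of some $E\in\cat{E}$; there is no reason these should exhaust all finitely generated projectives. Indeed, Theorem \ref{th:fgpis+nice} shows that every finitely generated projective over $R_{\cat{E}}$ is a direct sum of modules induced either from submodules of simple objects of $\cat{E}$ \emph{or} from finitely generated projective $R$-modules, and the second kind need not lie in the image of $\fac{E}$. (Take $\cat{E}=0$: then $\fac{E}=0$ while $P_{\directsum}(R_{\cat{E}})=P_{\directsum}(R)$.) The paper is careful here: in the discussion preceding the theorem it says the map from $\abmon{\relproj{E^{\perp}}}$ is surjective but only that there is ``a map'' from $\abmon{\fac{E}}$. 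So drop the surjectivity claim; your injectivity argument already establishes exactly what the theorem says about the relations.
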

\begin{proof}
We have already noted that $\Tor_{1}^{R}(\ , R_{\cat{E}})$ gives an
exact functor from $\fac{E}$ to $\fgproj{R_{\cat{E}}}$. Choosing a
surjection from some module $E\in \cat{E}$  onto a given $M \in
\fac{E}$ with kernel $K$ identifies $\Tor_{1}^{R}(M, R_{\cat{E}})$
with $K \tensor R_{\cat{E}}$ and proves that the image of this functor
is the image of the full subcategory of finitely presented submodules
of modules in $\cat{E}$. 

Finally, we see that lemmas \ref{lem:chartoriso} and \ref{lem:remtopE}
give the relations imposed on $\abmon{\fac{E}}$ to be as required.
\end{proof}
 
\begin{theorem}
\label{th:fgpis+nice}
Let $R$ be a hereditary ring and let $\cat{E}$ be a well-placed
subcategory of $\fpmod{R}$  all of whose modules are bound. Let $S$ be
the set of modules simple as objects of $\cat{E}$. Then every finitely
generated projective module over $R_{\cat{E}}$ is isomorphic to a
direct sum of modules $M \tensor R_{\cat{E}}$  where either $M$ is a
finitely generated projective module over $R$ or else $M$ is a
submodule of a module in $S$. 
\end{theorem}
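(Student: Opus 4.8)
The plan is to combine the structural results already established: every finitely generated projective module over $R_{\cat{E}}$ is of the form $M \tensor R_{\cat{E}}$ for some $M \in \relproj{E^{\perp}}$ (Theorem \ref{th:charfgproj}), and every module in $\relproj{E^{\perp}}$ is built from $\fgproj{R}$ and $\cat{E}$ by extensions and submodules (Lemma \ref{lem:charPEp}). The strategy is to show that after tensoring with $R_{\cat{E}}$, both the projective building blocks and the $\cat{E}$-related building blocks collapse into the very restricted list of direct summands claimed, and that all the extensions split.

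First I would reduce to understanding $M \tensor R_{\cat{E}}$ for $M \in \relproj{E^{\perp}}$ directly. By Lemma \ref{lem:PEptoproj} we may replace $M$ by its image in $M \tensor R_{\cat{E}}$, which still lies in $\relproj{E^{\perp}}$ and is torsion-free, so assume $M \subset M \tensor R_{\cat{E}}$. Now I would run an induction on the length of a filtration of $M$ with subfactors in $\fgproj{R} \cup \cat{E}$, whose existence follows from Lemma \ref{lem:charPEp} (more precisely, from its proof: $M/K$ sits inside some $N_i$ with $\tses{R^n}{N_i}{E_i}$, and $K \in \cat{E}$, so $M$ has a three-step filtration with subfactors in $\cat{E}$, $\fgproj{R}$, $\cat{E}$). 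Tensoring $\tses{A}{M}{B}$ with $R_{\cat{E}}$, using that $\Tor_1^R(M \tensor R_{\cat{E}}, R_{\cat{E}}) = 0$ to kill connecting maps where needed, gives $M \tensor R_{\cat{E}}$ as an extension of $B \tensor R_{\cat{E}}$ by $A \tensor R_{\cat{E}}$ (or a quotient thereof), and both are finitely generated projective over $R_{\cat{E}}$; any short exact sequence of finitely generated projective modules splits, so $M \tensor R_{\cat{E}}$ is a direct sum of the inductively obtained summands.

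It then remains to analyse the two base cases. If the subfactor is a finitely generated projective module $P$ over $R$, then $P \tensor R_{\cat{E}}$ is already in the desired form. If the subfactor lies in $\cat{E}$, I need its contribution to be a direct sum of modules $S \tensor R_{\cat{E}}$ with $S$ a submodule of a simple object of $\cat{E}$: here I would use that $\cat{E}$ is a finite length category (Theorem \ref{th:wpisfl}), so any $E \in \cat{E}$ has a composition series with simple subfactors in $S$; but a simple object $T \in S$ itself satisfies $T \tensor R_{\cat{E}} = 0$ by Lemma \ref{lem:PEptoproj}, so the relevant contributions come only from the \emph{proper} submodules arising when the filtration of $M$ cuts through an $\cat{E}$-layer — exactly the submodules-of-simples appearing in Lemma \ref{lem:subnotbound}. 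Concretely, when I peel off an $\cat{E}$-subfactor $E$ sitting inside the torsion-free $M$, the actual submodule of $M$ it generates is (by torsion-freeness) a submodule $S$ of $E$ not lying in $\cat{E}$, hence by finite length a submodule that maps to a strict submodule of some simple object; refining gives the submodules-of-simples. The main obstacle is bookkeeping the interaction between the filtration and torsion-freeness: I must ensure that each time I split off a piece, the piece that survives tensoring is genuinely a submodule of a simple object (not merely a subquotient), which forces me to be careful to take submodules of $M$ rather than abstract subfactors, and to invoke Lemma \ref{lem:subnotbound} to know these pieces become nonzero projective rather than vanishing. Once that is arranged, the splitting of projective extensions finishes the proof.
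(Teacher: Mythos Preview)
Your argument has a real gap at the filtration step. You assert that $M\in\relproj{E^{\perp}}$ has a filtration with subfactors in $\fgproj{R}\cup\cat{E}$, citing Lemma~\ref{lem:charPEp}, and you make this concrete with the ``three-step filtration with subfactors in $\cat{E}$, $\fgproj{R}$, $\cat{E}$''. But Lemma~\ref{lem:charPEp} says $\relproj{E^{\perp}}$ is generated by $\fgproj{R}$ and $\cat{E}$ under extensions \emph{and submodules}; that extra closure operation breaks the filtration claim. Concretely, with $K=0$ (the torsion-free case) and $M\subset N_i$ where $\tses{R^n}{N_i}{E_i}$, your filtration is $0\subset M\cap R^n\subset M$: the bottom subfactor is projective, but the top subfactor $M/(M\cap R^n)$ is only a \emph{submodule} of $E_i$, not an element of $\cat{E}$. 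If it were always in $\cat{E}$ it would vanish under $\tensor R_{\cat{E}}$ and every finitely generated projective $R_{\cat{E}}$-module would be induced from $\fgproj{R}$, which is stronger than the theorem and false in general. Your last paragraph senses this and tries to talk about ``the actual submodule of $M$ it generates'', but a subfactor does not generate a submodule, and the passage does not recover.

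The missing move is the one the paper makes: rather than filter $M$ directly, filter the ambient module and intersect. One embeds the torsion-free $M$ into some $F_t\supset R^n$ with $F_t/R^n\in\cat{E}$; the module $F_t$ genuinely has a filtration whose subfactors are free or simple in $\cat{E}$ (refine by a composition series of $F_t/R^n$). Intersecting that filtration with $M$ gives a filtration of $M$ whose subfactors are \emph{submodules} of free modules (hence projective) or \emph{submodules} of modules in $S$. Now your splitting argument applies cleanly: each subfactor lies in $\relproj{E^{\perp}}$, so $\tensor R_{\cat{E}}$ is exact along the filtration and every layer becomes finitely generated projective, hence the extensions split. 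This is exactly where the ``submodule of a simple'' summands in the statement come from; they are not produced by any bookkeeping on an $\cat{E}$-valued filtration, because no such filtration of $M$ is available.
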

\begin{proof}
  We may take our finitely generated projective module over
  $R_{\cat{E}}$ to be of the form $K \tensor R_{\cat{E}}$ where $K$ is
  a finitely presented module over $R$ torsion-free with respect to
  the torsion theory generated by $\cat{E}$ so that $K$ is a submodule
  of $K \tensor R_{\cat{E}}$ which is a submodule of $R_{\cat{E}}^{n}$
  for some $n$. Since $R_{\cat{E}}^{n}$ is a directed union of modules
  $F_{t}$ where $F_{t}\supset R^{n}$ and $F_{t}/R^{n} \in \cat{E}$, we
  know that $K$ is a submodule of some $F_{t}$. But $F_{t}$ has a
  filtration whose subfactors are either finitely generated free or
  else isomorphic to a module in $S$. Intersecting this filtration
  with $K$ gives a filtration of $K$ where the subfactors are
  isomorphic to submodules of free modules (so projective) or else
  submodules of modules in $S$. Moreover, these subfactors are
  finitely presented because $R$ is hereditary and so coherent. 

  Because both of these kinds of modules induce to finitely generated
  projective module over $R_{\cat{E}}$, it follows that this
  filtration is split by $\tensor R_{\cat{E}}$ and so $K \tensor
  R_{\cat{E}}$ is isomorphic to the direct sum of the modules induced
  from the subfactors which gives what we set out to prove.
\end{proof}

In order to get further information, we introduce a new kind of module
in $\relproj{E^{\perp}}$. We say that a module in some full subcategory of
$\fpmod{R}$ is late in that category if and only if any map in the
full subcategory from it is injective. If the category is an exact
abelian subcategory then this is equivalent to being a simple object
in the category but in general it is not. Such a module may have
subobjects in the subcategory if the subcategory is not closed under
factors. We have no guarantee that late objects exist; however, when
they do they can be useful.

\begin{lemma}
\label{lem:indindislate}
Let $M$ be torsion-free with respect to the torsion theory generated
by $\cat{E}$ and assume that $M$  has no factor in $\cat{E}$. Then if
$M \tensor R_{\cat{E}}$ is an indecomposable finitely generated
projective module over $R_{\cat{E}}$, $M$ is a late object in
$\relproj{E^{\perp}}$.    
\end{lemma}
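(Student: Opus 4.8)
The plan is to realise $M$ as an object of $\relproj{E^{\perp}}$ and then apply the standard ``an indecomposable splits only trivially'' principle to the short exact sequence determined by an arbitrary morphism out of $M$.

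First I would check that $M\in\relproj{E^{\perp}}$. Since $M$ is torsion-free with respect to the torsion theory generated by $\cat{E}$, the canonical map $M\to M\tensor R_{\cat{E}}$ is injective (theorem \ref{th:structfpind}), so $M$ is a finitely presented submodule of the finitely generated projective $R_{\cat{E}}$-module $M\tensor R_{\cat{E}}$; exactly as in the proof of lemma \ref{lem:PEptoproj} (using that $R$ is hereditary, so that $\Ext_{R}$ of a submodule is a quotient of $\Ext_{R}$ of the ambient module, and that $\Ext_{R}$ agrees with $\Ext_{R_{\cat{E}}}$ on $R_{\cat{E}}$-modules), this forces $M\in\relproj{E^{\perp}}$. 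It then suffices to show that any morphism $\map{f}{M}{N}$ with $N\in\relproj{E^{\perp}}$ and $\ker f\neq 0$ must be zero; that is, every morphism out of $M$ in $\relproj{E^{\perp}}$ is a monomorphism or $0$, which is the meaning of ``late'' for the nonzero module $M$.

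Next, given such an $f$, I would set $K=\ker f$. Because $R$ is coherent, $\im f$ is a finitely presented submodule of $N$ and $K$ is a finitely presented submodule of $M$; since $\relproj{E^{\perp}}$ is closed under submodules, both $K$ and $\im f$ lie in $\relproj{E^{\perp}}$. Applying $\tensor R_{\cat{E}}$ to $\tses{K}{M}{\im f}$ and using that $\Tor_{1}^{R}(\im f,R_{\cat{E}})=0$ for objects of $\relproj{E^{\perp}}$, I obtain a short exact sequence $\tses{K\tensor R_{\cat{E}}}{M\tensor R_{\cat{E}}}{\im f\tensor R_{\cat{E}}}$ of $R_{\cat{E}}$-modules; by lemma \ref{lem:PEptoproj} the term $\im f\tensor R_{\cat{E}}$ is projective, so the sequence splits and $M\tensor R_{\cat{E}}\cong(K\tensor R_{\cat{E}})\directsum(\im f\tensor R_{\cat{E}})$.

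Finally I would invoke indecomposability of $M\tensor R_{\cat{E}}$: one of the two summands vanishes. If $K\tensor R_{\cat{E}}=0$ then $K\in\cat{E}$ by lemma \ref{lem:PEptoproj}, but a nonzero submodule of $M$ lying in $\cat{E}$ contradicts the torsion-freeness of $M$; hence $K=0$ and $f$ is injective. If instead $\im f\tensor R_{\cat{E}}=0$ then $\im f\in\cat{E}$, so a nonzero such image would exhibit a factor of $M$ in $\cat{E}$, contrary to hypothesis; hence $\im f=0$ and $f=0$. Either way $M$ is late in $\relproj{E^{\perp}}$. The only non-formal point in all of this is keeping the sequence exact and split after inducing up, which rests entirely on the closure of $\relproj{E^{\perp}}$ under submodules, the vanishing of $\Tor_{1}^{R}(-,R_{\cat{E}})$ there, and lemma \ref{lem:PEptoproj}; once those are in hand the indecomposability dichotomy is immediate.
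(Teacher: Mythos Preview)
Your proof is correct and follows essentially the same line as the paper's: embed $M$ in $M\tensor R_{\cat{E}}$ to place it in $\relproj{E^{\perp}}$, use closure under submodules to get $\ker f,\im f\in\relproj{E^{\perp}}$, tensor up to split the sequence, and then let indecomposability together with lemma~\ref{lem:PEptoproj} force either $\ker f=0$ (via torsion-freeness) or $\im f=0$ (via no factor in $\cat{E}$). The only difference is cosmetic: you spell out the $\Tor_{1}$-vanishing and the splitting explicitly, whereas the paper compresses this into a single sentence.
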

 \begin{proof}
Since $M$ is torsion-free, $M$ embeds in $M \tensor R_{\cat{E}}$ and
so $M\in \relproj{E^{\perp}}$. Suppose that we have a map in
$\relproj{E^{\perp}}$, $\map{f}{M}{N}$. Since $\relproj{E^{\perp}}$ is closed
under submodules, we have a short exact sequence
$\tses{\ker(f)}{M}{\im(f)}$ all of whose terms lie in
$\relproj{E^{\perp}}$. Applying $\tensor R_{\cat{E}}$ shows that $M \tensor
R_{\cat{E}}\cong \im(f) \tensor R_{\cat{E}} \directsum \ker(f) \tensor
R_{\cat{E}}$ and since $M \tensor R_{\cat{E}}$ is indecomposable
either $\im(f) \tensor R_{\cat{E}}=0$ or else $\ker(f) \tensor
R_{\cat{E}}=0$. In the first case, this implies that $\im(f) \in
\cat{E}$  and so $\im(f)=0$ since $M$ has no factor in $\cat{E}$ and
this means that $f$ is the zero map. In
the second case, this implies that $\ker(f) \in \cat{E}$ and so
$\ker(f)=0$ which means that $f$ is injective as stated. 
\end{proof}

Thus if we happen to know that every finitely generated projective
module over $R_{\cat{E}}$ is a direct sum of indecomposable finitely
generated projective modules we will have a large supply of late
objects in $\relproj{E^{\perp}}$. 

We need the dual concept for the category $\fac{E}$. We say that a
module in a full subcategory is an early object in the subcategory if
and only if any nonzero map in the subcategory to it must be
surjective as a homomorphism of modules. We simply state the next
lemma since its proof is no different from the preceding lemma.

\begin{lemma}
\label{lem:ind2early}
Let $M \in \fac{E}$ have no factors nor submodules in $\cat{E}$. Then
if $\Tor_{1}^{R}(M, R_{\cat{E}})$ is an indecomposable finitely
generated projective module over $R_{\cat{E}}$, $M$ is an early object
in $\fac{E}$.   
\end{lemma}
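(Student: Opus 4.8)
The plan is to run the dual of the proof of Lemma \ref{lem:indindislate}, replacing the exact functor $\tensor R_{\cat{E}}$ on $\relproj{E^{\perp}}$ by the exact functor $\Tor_{1}^{R}(\ ,R_{\cat{E}})$ on $\fac{E}$ supplied by Theorem \ref{th:relsonA+fac}, and systematically exchanging the roles of "submodule" and "factor". So let $\map{g}{N}{M}$ be a nonzero map in $\fac{E}$; the goal is to show $g$ is surjective.

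First I would record two structural facts: $\fac{E}$ is closed under factors (a quotient of a factor of a module in $\cat{E}$ is again a factor of a module in $\cat{E}$), and $\fpmod{R}$ is abelian because $R$ is hereditary, hence coherent. Consequently $\im(g)$ and the cokernel $M/\im(g)$ are finitely presented, and being quotients of modules in $\fac{E}$ they again lie in $\fac{E}$; this yields a short exact sequence $\tses{\im(g)}{M}{M/\im(g)}$ in $\fac{E}$. Applying the exact functor $\Tor_{1}^{R}(\ ,R_{\cat{E}})$ of Theorem \ref{th:relsonA+fac} gives a short exact sequence $\tses{\Tor_{1}^{R}(\im(g),R_{\cat{E}})}{\Tor_{1}^{R}(M,R_{\cat{E}})}{\Tor_{1}^{R}(M/\im(g),R_{\cat{E}})}$ of finitely generated projective $R_{\cat{E}}$-modules; since the right-hand term is projective the sequence splits, so $\Tor_{1}^{R}(M,R_{\cat{E}})\cong \Tor_{1}^{R}(\im(g),R_{\cat{E}})\directsum \Tor_{1}^{R}(M/\im(g),R_{\cat{E}})$, and indecomposability of $\Tor_{1}^{R}(M,R_{\cat{E}})$ forces one of the two summands to vanish.

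The two cases are then disposed of exactly as in Lemma \ref{lem:indindislate}. If $\Tor_{1}^{R}(\im(g),R_{\cat{E}})=0$, then since $\im(g)\in\fac{E}$ also $\im(g)\tensor R_{\cat{E}}=0$, so $\im(g)$ is $R_{\cat{E}}$-trivial and hence lies in $\cat{E}$ by theorem 5.6 of \cite{Scho07}; but $\im(g)$ is a submodule of $M$, which by hypothesis has no submodule in $\cat{E}$, so $\im(g)=0$, contradicting $g\neq 0$. Therefore $\Tor_{1}^{R}(M/\im(g),R_{\cat{E}})=0$, and the same reasoning shows $M/\im(g)\in\cat{E}$; as $M/\im(g)$ is a factor of $M$ and $M$ has no factor in $\cat{E}$, we conclude $M/\im(g)=0$, i.e.\ $g$ is surjective. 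The argument is essentially formal; the only points needing a little care — and the genuine differences from the "late" case rather than a pure formal dual — are that here one invokes closure of $\fac{E}$ under \emph{factors} (not submodules), one uses $\im(g)\tensor R_{\cat{E}}=0$ coming from membership in $\fac{E}$ in place of torsion-freeness, and one uses that the short exact sequence of $\Tor_{1}$'s splits because it lies in $\fgproj{R_{\cat{E}}}$. All of these are already established (the last being the content of the "exact functor" clause of Theorem \ref{th:relsonA+fac}, which also tacitly uses $\Tor_{2}^{R}=0$ over the hereditary ring $R$), so I expect no real obstacle.
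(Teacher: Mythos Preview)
Your proof is correct and is precisely the dual argument the paper has in mind: the paper omits the proof entirely, saying only that it ``is no different from the preceding lemma,'' and what you have written is exactly that dualisation of Lemma~\ref{lem:indindislate}, with the closure of $\fac{E}$ under factors replacing the closure of $\relproj{E^{\perp}}$ under submodules, and $\Tor_{1}^{R}(\ ,R_{\cat{E}})$ playing the role of $\tensor R_{\cat{E}}$.
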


Let $S$ be the set of simple objects in $\cat{E}$. We say that the
modules $A$ and $B$ are directly $S$-related if and only if there
exists either a short exact sequence $\tses{A}{S_{i}}{B}$ or a short
exact sequence $\tses{B}{S_{i}}{A}$ where $S_{i} \in S$. We say that
$A$ and $B$ are $S$-related if they are equivalent under the
equivalence relation generated by this relation. Clearly, if $A,B \in
\relproj{E^{\perp}}$ and are $S$-related then $A \tensor R_{\cat{E}}
\cong B \tensor R_{\cat{E}}$. We should like a converse to this
observation since attempting to find extensions of $A$ and $B$ by
modules in $\cat{E}$ that are isomorphic is hard whereas we may well
have a clear grasp of the submodule structure of the modules in $S$
and thus an ability to calculate which modules are $S$-related. The
next theorem provides us with a partial converse which in practice is
frequently decisive.

\begin{theorem}
\label{th:isorel}
Let $R$ be a hereditary ring and let $\cat{E}$ be a full subcategory
of $\fpmod{R} $ all of whose modules are bound. Let $S$ be the set of
simple objects in $\cat{E}$.  Let $A,B$ be late objects in
$\relproj{E^{\perp}}$ such that $A \tensor R_{\cat{E}} \cong B \tensor
R_{\cat{E}}$. Then either $A$ is $S$-related to $B$ or else one of $A$
and $B$ is
$S$-related to a module in $\relproj{E^{\perp}}$ that is not late or to a
module in $\fac{E}$ that is not early, either of which imply that $A
\tensor R_{\cat{E}}$ is a decomposable projective module over
$R_{\cat{E}}$. 

Thus if in addition $A \tensor R_{\cat{E}}$ is an indecomposable
finitely generated projective module over $R_{\cat{E}}$ then $A$ and
$B$ are $S$-related. 
\end{theorem}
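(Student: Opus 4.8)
The plan is to peel $\cat{E}$ apart one simple object at a time. Use Theorem~\ref{th:fgpis+nice} to recognise the given projective as a direct sum of blocks induced from submodules of simple objects of $\cat{E}$ (or from $R$-projectives); use Theorem~\ref{th:chariso} together with Lemmas~\ref{lem:remtopE} and~\ref{lem:chartoriso} to connect two such blocks by a chain of $S$-relations; and detect the failure of lateness (dually, earliness) along the chain as precisely the moment at which $A\tensor R_{\cat{E}}$ acquires a nontrivial direct summand, invoking Lemmas~\ref{lem:indindislate} and~\ref{lem:ind2early} for that last implication.

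First I would record what lateness gives. If $A\tensor R_{\cat{E}}=0$ then $A\in\cat{E}$ by Lemma~\ref{lem:PEptoproj}; since $A$ is late, every quotient $A\to A/A'$ with $A'$ a proper submodule is a non-injective morphism in $\relproj{E^{\perp}}$ (its target lies in $\cat{E}$), so $A$ has no proper nonzero submodule, and the claim holds for $A$ and $B$. So assume $A\tensor R_{\cat{E}}\cong B\tensor R_{\cat{E}}$ is a nonzero projective. The torsion submodule of $A$ for the torsion theory generated by $\cat{E}$ lies in $\cat{E}\subseteq\relproj{E^{\perp}}$ while the torsion-free quotient lies in $\relproj{E^{\perp}}$ by Lemma~\ref{lem:PEptoproj}, so lateness forces $A$ torsion-free; and a nonzero factor of $A$ in $\cat{E}$ would be the target of a non-injective morphism in $\relproj{E^{\perp}}$, so neither $A$ nor $B$ has a nonzero factor in $\cat{E}$. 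Thus $A$ and $B$ are eligible for Theorem~\ref{th:chariso}.

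Now the core. By Theorem~\ref{th:fgpis+nice}, $A\tensor R_{\cat{E}}$ is a direct sum of modules $M\tensor R_{\cat{E}}$ with each $M$ either $R$-projective or a submodule of some simple object $S_i\in S$, and likewise for $B$. Given two submodules $M\subseteq S_i$ and $M'\subseteq S_j$ with $M\tensor R_{\cat{E}}\cong M'\tensor R_{\cat{E}}$, I would connect $M$ to $M'$ by $S$-relations as follows: the defining sequences $\tses{M}{S_i}{S_i/M}$ and $\tses{M'}{S_j}{S_j/M'}$ are themselves $S$-relations (their middle terms are simple objects of $\cat{E}$), the quotients $S_i/M$ and $S_j/M'$ lie in $\fac{E}$, and since $S_i$, $S_j$ induce to $0$ with vanishing $\Tor_{1}^{R}$ we get $\Tor_{1}^{R}(S_i/M,R_{\cat{E}})\cong M\tensor R_{\cat{E}}\cong M'\tensor R_{\cat{E}}\cong\Tor_{1}^{R}(S_j/M',R_{\cat{E}})$; after normalising via Lemma~\ref{lem:remtopE} and applying Lemma~\ref{lem:chartoriso}, one obtains a common submodule $K$ of objects of $\cat{E}$ bridging these two quotients-of-simples, and unwinding the various short exact sequences yields a chain of $S$-relations from $M$ to $M'$. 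It then remains to connect $A$ to the block $M$ and $B$ to $M'$: Theorem~\ref{th:chariso} gives a module containing $A$ and $M$ with $\cat{E}$-quotients, and since $\cat{E}$ is a finite length category (Theorem~\ref{th:wpisfl}) one peels off a composition series one simple at a time, exactly as in the proof of Theorem~\ref{th:thecore}. If every intermediate module met along the whole chain is late where it lies in $\relproj{E^{\perp}}$ and early where it lies in $\fac{E}$, the chain realises $A\sim_S B$. If not, truncate at the first intermediate $X$ that is a non-late object of $\relproj{E^{\perp}}$ or a non-early object of $\fac{E}$: then $A$ (or $B$) is $S$-related to $X$, and Lemma~\ref{lem:indindislate} (resp.\ Lemma~\ref{lem:ind2early}) shows $A\tensor R_{\cat{E}}$ decomposes. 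The closing assertion of the theorem is the contrapositive: indecomposability of $A\tensor R_{\cat{E}}$ rules out the second alternative, leaving $A\sim_S B$.

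The step I expect to be the main obstacle is the passage ``from $A$ to $M$'': showing that a late object of $\relproj{E^{\perp}}$ with indecomposable induced projective is $S$-related to an actual submodule of a simple object of $\cat{E}$ (or is $R$-projective), and not merely one that induces the same projective. This forces a careful interaction between the finite length structure of $\cat{E}$ — used to remove a composition series one simple at a time — and the precise definition of an $S$-relation, whose middle term is a single simple object rather than a longer extension, so that each removal step must be re-expressed through a submodule of a simple; and between the late/non-late (dually early/non-early) dichotomy and the indecomposable/decomposable dichotomy for the induced projective. Tracking which of $A$, $B$ is being modified at each stage, and which of $\relproj{E^{\perp}}$ or $\fac{E}$ each intermediate module inhabits, is the bookkeeping that must be done with care.
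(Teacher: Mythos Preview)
The detour through Theorem~\ref{th:fgpis+nice} does not buy you anything: your acknowledged ``main obstacle''---connecting the late object $A$ to a block $M$ sitting inside a simple of $\cat{E}$---is at least as hard as the theorem itself, and the method you propose for it (apply Theorem~\ref{th:chariso} to the pair $(A,M)$ and peel simples from a composition series) is exactly what should be applied directly to the pair $(A,B)$. Routing through $M$ and $M'$ triples the work and adds a bridging step whose short exact sequences (from Lemma~\ref{lem:chartoriso}) have middle terms in $\cat{E}$ that need not be simple, so are not themselves $S$-relations and would require further refinement. In short, you have postponed the construction of the chain of $S$-relations to a subroutine that is the original problem in disguise.

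The mechanism the paper uses, and which your sketch does not articulate, is a direct induction that \emph{alternates} between $\relproj{E^{\perp}}$ and $\fac{E}$. From Theorem~\ref{th:chariso} one has $\tses{A}{C}{E}$ and $\tses{B}{C}{F}$ with $E,F\in\cat{E}$; induct on $\ell(E)+\ell(F)$. Choose a simple quotient $T$ of $F$ and look at the composite $A\to C\to F\to T$. If it is zero it factors through $C/A=E$, so replacing $C$ by $\ker(C\to T)$ drops both $E$ and $F$ in length. If it is nonzero then, because $A$ is late and $A\notin\cat{E}$, the map $A\to T$ is injective but not surjective; hence $A$ is directly $S$-related to $A':=T/A\in\fac{E}$. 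The symmetric step for $B$ (using a simple quotient of $E$) produces $B'\in\fac{E}$, and $A',B'$ are quotients of the single module $C/(A+B)$ with kernels in $\cat{E}$ of strictly smaller length. This is the configuration dual to the starting one, so if $A',B'$ are early the dual argument continues the induction; the first time an intermediate object fails to be late (on the $\relproj{E^{\perp}}$ side) or early (on the $\fac{E}$ side) one halts with the second alternative, and Lemmas~\ref{lem:indindislate} and~\ref{lem:ind2early} then give decomposability. Your outline gestures at ``peeling simples'' but never supplies this late/early alternation, which is what actually manufactures the chain of direct $S$-relations.
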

 \begin{proof}
   Since $A \tensor R_{\cat{E}} \cong B \tensor R_{\cat{E}}$, there
   exist short exact sequences $\tses{A}{C}{E}$ and $\tses{B}{C}{F}$
   where $E,F\in \cat{E}$. Since $\cat{E}$ is a finite length category
   we proceed by induction on the sum of the lengths of $E$ and $F$ as
   objects in $\cat{E}$. Neither of these lengths can be zero since
   this forces one of $A$ and $B$ to be a submodule of the other and
   hence makes them isomorphic. 

   Let $\tses{F'}{F}{T}$ be a short exact sequence where $T$ is a
   simple object in $\cat{E}$. Consider the composition of the maps
   from $A$ to $C$ to $F$ to $T$. If this is $0$, we take $C'$ to be
   the kernel of the surjection from $C$ to $T$ and $E'$ to be the
   kernel of the induced surjection from $E$ to $T$ and obtain the
   short exact sequences $\tses{A}{C'}{E'}$ and $\tses{B}{C'}{F'}$
   where $E'$ and $F'$ have shorter length as objects in
   $\cat{E}$. Otherwise, the map from $A$ to $T$ is nonzero. It cannot
   be surjective since $A$ is late and not in $\cat{E}$. So it must be
   injective because $A$ is late. Let $A'$ be the factor so that $A$
   is directly $S$-related to $A'$; we note that $A' = C/A+C'$, $A\cap
   C'=0$ and so we have a short exact sequence $\tses{F'}{C/A+B}{A'}$
   where $F' \in \cat{E}$ of length $1$ less than the length of
   $F$. Interchanging the role of $A$ and $B$, we obtain $B'$ directly
   $S$-related to $B$, and a short exact sequence
   $\tses{E'}{C/A+B}{B'}$ where $E' \in \cat{E}$ and has length $1$
   less than the length of $E$. 

   We now note that we have obtained a dual situation to the one we
   began with for $A$ and $B$ whenever both $A'$ and $B'$ are early
   objects in $\fac{E}$ but since the sum of the lengths of $E'$
   and $F'$ is $2$ less than the sum of the lengths of $E$ and $F$,
   induction and the dual argument to the one above apply to complete
   the proof.
\end{proof}

\end{document}